\newtheorem{thm}{Theorem}
\newtheorem{prop}[thm]{Proposition}
\newtheorem{lem}[thm]{Lemma}
\newtheorem{cor}[thm]{Corollary}
\theoremstyle{definition}
\theoremstyle{remark}
\newcommand{\vol}{\text{Vol}}
\newcommand{\BR}{\mathbb{R}}
\newcommand{\BC}{\mathbb{C}}
\newcommand{\Div}{\text{div}}
\newcommand{\Real}{\text{Re}}
\def\Xint#1{\mathchoice
{\XXint\displaystyle\textstyle{#1}}%
{\XXint\textstyle\scriptstyle{#1}}%
{\XXint\scriptstyle\scriptscriptstyle{#1}}%
{\XXint\scriptscriptstyle\scriptscriptstyle{#1}}%
\!\int}
\def\XXint#1#2#3{{\setbox0=\hbox{$#1{#2#3}{\int}$ }
\vcenter{\hbox{$#2#3$ }}\kern-.6\wd0}}
\def\dashint{\Xint-}
\begin{document}

\title{Results on Gradients of Harmonic Functions on Lipschitz Surfaces}

\author{Benjamin Foster}
\email{bfost@stanford.edu}

\begin{abstract}
We study various properties of the gradients of solutions to harmonic functions on Lipschitz surfaces. We improve an exponential bound of Naber and Valtorta \cite{NV} on the size of the superlevel sets for the frequency function to a sharp quadratic bound in this setting using complex analytic tools. We also develop a propagation of smallness for gradients of harmonic functions, settling an open question from \cite{LM} in this setting. Finally, we extend the estimate on superlevel sets of the frequency to more general divergence-form elliptic PDEs with bounded drift terms at the cost of a subpolynomial factor.
\end{abstract}

 \maketitle
 \section{Introduction}
Throughout, we will study the size of the gradient for harmonic functions on 2-dimensional domains with Lipschitz Riemannian metrics. We denote the Laplace-Beltrami operator by $\Delta_g$, and a function $u$ is said to be harmonic if it solves the equation $\Delta_g u=0$. In local coordinates, if we let $g_{ij}$ denote the components of the metric, $g^{ij}$ denote the components of the inverse tensor, and $|g|$ denote the determinant, the equation takes the form
\begin{equation}
    \frac{1}{\sqrt{|g|(x)}}\partial_i(\sqrt{|g|(x)}g^{ij}(x)\partial_ju(x))=0\qquad\text{in }B_2(0),
\end{equation}
where we have employed Einstein summation convention. We denote the maximum of the ellipticity and Lipschitz constants by $\lambda$, i.e.
\begin{equation}
    |g_{ij}(x)-g_{ij}(y)|\le \lambda |x-y|, \qquad \qquad \frac{1}{\lambda}|v|^2\le \sum_{i,j}g_{ij}v_iv_j\le \lambda|v|^2
\end{equation}
hold uniformly across all $x,y\in B_2(0)$ and all $v\in\BR^2$, for some $\lambda>1$.

Standard regularity results imply that the solution $u$ is $C^1$ \cite[Theorem 3.13]{HLBook}. Its critical set $\mathcal{C}(u)$, consisting of the points in $B_1(0)$ where $\nabla u$ vanishes, is finite. In fact, for nonconstant harmonic functions on the unit disk, the number of critical points can be bounded by a multiple of the \textbf{frequency function}\footnote{The frequency function will be defined more carefully in section \ref{sec: freq}.} of the solution \cite{HanNodal}. It was shown more recently in \cite{Zhu} that this holds more generally for solutions to divergence form elliptic equations with Lipschitz coefficients and a bounded drift term.

The frequency function of Almgren is a versatile tool that has been used in the study of elliptic equations\footnote{See for instance \cite{GL1}, \cite{GL2}, \cite{KZ}, and \cite{Tolsa} for applications to unique continuation problems, or \cite{CNV} and \cite{NV} for applications to the size of critical and singular sets of solutions to elliptic equations.}. It takes in a point in the domain and a scale parameter $r$ and satisfies a monotonicity property in the scale parameter, converging to the vanishing order of the solution at the point as $r$ tends to 0. When $r$ is instead fixed, it captures how the solution grows in size from the ball of radius $r$ to the ball of radius $2r$. Heuristically, the frequency function can be thought of as the analog of the ``degree of a polynomial" in the setting of solutions to elliptic equations; see \eqref{def freq sum} for a precise formulation. We direct the interested reader to \cite{PCMI} for a more complete exposition on the properties of the frequency function.

Quantitative bounds on the size of the critical set in terms of the frequency of the solution has been a topic of research interest, with the current best known bounds in higher dimensions established in \cite{NV}. It is known that the dimension of the critical set is at most $n-2$, and Lin conjectured in \cite{Lin} that 
\begin{equation}
    \mathcal{H}^{n-2}(\mathcal{C}(u)\cap B_{1/2}(0))\le CN^2,
\end{equation}
where $\mathcal{H}^{n-2}$ denotes the $(n-2)$-dimensional Hausdorff measure and $N$ is an upper bound for the frequency. The authors of \cite{NV} established Minkowski estimates on the volume of neighborhoods of the critical set by studying \textbf{effective critical sets} $\mathcal{C}_r(u)$, which have the advantage of behaving better under perturbation of the solution than the critical set\footnote{$\mathcal{C}_r(u)$ is defined more precisely in section \ref{sec: freq}}. The key fact was that these effective critical sets were covered by superlevel sets of the frequency function, which was the quantity whose size they were able to control. Their estimates are exponential in $N^2$ in general, although in dimension 2 they are only exponential in $N$. In dimension two, we can improve their bound to a quadratic bound that is sharp. This is our first main result.
\begin{thm}\label{main eff}
Suppose $u$ is a solution to $\Delta_g u=0$ on the Euclidean disk of radius 2. Assume its frequency satisfies $N_u(0,1)\le \Lambda$. Then we have the following volume estimate for the superlevel sets of the frequency function
\begin{equation}
    \vol(\{x:N(x,r)>c\}\cap B_{1/2}(0))\le C\Lambda^2 r^2,
\end{equation}
where $c<1$ is a suitably chosen constant and where $C>0$ is large enough, both depending on the Lipschitz and ellipticity constants of $g$.
\end{thm}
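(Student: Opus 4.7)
The plan is to exploit the complex-analytic structure available in dimension two. First, I invoke isothermal coordinates: on any two-dimensional domain with a Lipschitz Riemannian metric, there exist bi-Lipschitz coordinates (with distortion depending only on $\lambda$) in which $\Delta_g$ reduces to a positive scalar multiple of the flat Laplacian. Harmonicity, the critical set, and the frequency function are all preserved (the latter up to $\lambda$-dependent constants), so it suffices to prove the theorem for the Euclidean Laplacian.

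In the flat setting the simply connected domain $B_2$ admits a conjugate harmonic function $v$, and $F=u+iv$ is holomorphic with $F'(z)=u_x-iu_y$; in particular $|F'|=|\nabla u|$ and the critical set of $u$ coincides with the zero set of $F'$ with matching multiplicities. The frequency bound $N_u(0,1)\leq\Lambda$ converts to a doubling estimate of the form $\log\bigl(\sup_{B_1}|F'|/\sup_{B_{1/2}}|F'|\bigr)\leq C\Lambda$, and Jensen's formula then bounds the total multiplicity of zeros of $F'$ in $B_{3/4}$ by $C\Lambda$. List these zeros as $z_1,\dots,z_K$ with multiplicities $m_1,\dots,m_K$, so that $\sum_j m_j\leq C\Lambda$.

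The heart of the proof is a multiplicity-weighted covering: for a suitable threshold $c<1$,
\begin{equation*}
    \{x\in B_{1/2}:N(x,r)>c\}\subset\bigcup_{j=1}^{K} B_{C m_j r}(z_j).
\end{equation*}
The intuition is that near $z_j$ we have $F'(z)=\alpha_j(z-z_j)^{m_j}+O(|z-z_j|^{m_j+1})$, so $u$ resembles the harmonic polynomial $\Real((z-z_j)^{m_j+1})$; a direct computation for this polynomial shows that the frequency at distance $d$ from $z_j$ and scale $r\ll d$ is of order $(m_j r/d)^2$, so $N(x,r)>c$ forces $d\leq C m_j r$. Making this heuristic rigorous requires quantifying the Taylor remainder of $F$ about each $z_j$, which is handled using the global doubling. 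Granting the covering, the desired estimate follows by elementary summation:
\begin{equation*}
    \vol\bigl(\{N(\cdot,r)>c\}\cap B_{1/2}\bigr) \leq \pi\sum_{j=1}^{K} (C m_j r)^2 \leq \pi C^2 \Bigl(\sum_{j=1}^{K} m_j\Bigr)^2 r^2 \leq C'\Lambda^2 r^2,
\end{equation*}
via $\sum m_j^2 \leq (\sum m_j)^2$.

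The main obstacle is establishing the multiplicity-weighted covering: zones of influence of distinct high-multiplicity zeros may overlap and interact, and one must control these interactions, perhaps by factoring out a Blaschke-type product accounting for the zeros of $F'$ and analyzing the remaining nonvanishing factor separately. Sharpness of the $\Lambda^2$ exponent is witnessed by $u=\Real(z^{\Lambda})$ at its single critical point of multiplicity $\Lambda-1$, which saturates the bound.
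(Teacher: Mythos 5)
Your outer scaffolding matches the paper: reduce to the flat Laplacian via isothermal coordinates, pass to the holomorphic function $F=u_x-iu_y$, and bound the number of zeros of $F$ in a smaller disk by $C\Lambda$ (the paper cites Han's Theorem 3.4 for this, which is essentially your Jensen argument). The gap is exactly the step you flag as ``the main obstacle'': the multiplicity-weighted covering
\begin{equation*}
\{x\in B_{1/2}:N(x,r)>c\}\subset\bigcup_{j}B_{Cm_jr}(z_j)
\end{equation*}
is \emph{false} as stated, not merely hard to prove. Take $K$ simple zeros ($m_j=1$) all lying in $B_r(0)$. For $x$ with $2r<|x|=d<Kr/(2c)$ one has $\sum_j m_j r/|x-z_j|\gtrsim Kr/d>c$, and (since the zeros all lie on one side of $x$) the second Taylor coefficient of $u$ at $x$ is of size $\sum_j|x-z_j|^{-1}\sim K/d$ times the first, so the averaged frequency is of order $(Kr/d)^2$ and the superlevel set contains an annulus of area $\sim K^2r^2$. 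Your covering $\bigcup_jB_{Cr}(z_j)$ has area only $\sim Kr^2$ and misses this annulus entirely once $K$ is large. The correct statement must account for clusters of zeros acting like a single zero of the cluster's total multiplicity, at every scale simultaneously; your final summation $\sum m_j^2\le(\sum m_j)^2$ is consistent with the truth, but the geometric input feeding it is wrong.

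The paper avoids the clustering issue entirely. It factors $F=Pg$ with $g$ nonvanishing, controls the nonvanishing factor by applying Harnack's inequality to the positive harmonic function $-\log|g|$ (showing $N_g(\cdot,r)\le 1/2$ for $r\lesssim 1/\Lambda$, after bounding $N_g$ by $N_F$ via a quasi-subadditivity lemma), and for the polynomial factor bounds the frequency pointwise by $C\sum_j r/|z-z_j|$ using only the crude sup/inf estimate. The superposition of the zeros' influences is then handled not by a covering but by a weak-$L^2$ estimate: each term $r/|z-z_j|$ has $\|\cdot\|_{L^{2,\infty}}^2\lesssim r^2$, and since $L^{2,\infty}$ is normable for $p=2>1$, the triangle inequality gives $\|\sum_j\cdot\|_{L^{2,\infty}}^2\lesssim\Lambda^2r^2$, which by Chebyshev bounds the measure of the superlevel set. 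This automatically captures all cluster interactions at all scales. If you want to salvage your approach you would need a genuine multi-scale stopping-time decomposition of the zero set into clusters, together with a rigorous treatment of the nonvanishing factor, which your sketch mentions only parenthetically; as written, the argument does not close.
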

We also generalize this to more general divergence-form elliptic equations with a bounded drift term; see section \ref{section gen eqn}.

As mentioned before, a key property of the effective critical set at scale $r$ introduced by Naber and Valtorta is that it is covered by $\{x:N_u(x,r)>c_0\}$ for a suitable universal constant $c_0$ and contains a tubular neighborhood of the critical set. In higher dimensions, where sharp bounds on the size of the critical set have not been obtained, it can be helpful to informally think that the $(n-2)$-dimensional parts of the critical set occur when the solution is locally very symmetric and close to being a function of two variables. This can be made precise via stratification arguments; see \cite{Han} for a non-quantitative stratification and \cite{CNV}, \cite{NV} for a discussion of the quantitative stratification. The effective critical set is open, unlike the critical set which can change dimension when perturbed. Having control on the size of the superlevel sets of the frequency function (and hence also the effective critical set) in the two-dimensional case could be useful in understanding the size and structure of the critical set in higher dimensions as a consequence of the aforementioned quantitative stratifications.

It has also been a topic of interest in elliptic PDEs to understand how solutions are quantitatively ``close" to being constant or nonconstant in terms of the size of the set where the gradient is small, rather than the critical set. Results of interest can loosely be described by the estimate
\begin{equation}
    \sup_{B_{1/2}(0)}|\nabla u|\le C \sup_{E}|\nabla u|^{\alpha} \sup_{B_{1}(0)}|\nabla u|^{1-\alpha},
\end{equation}
where $E$ is often taken to be some sublevel set of the gradient; we call this propagation of smallness for the gradient. The question is what properties of the set $E$ allow us to deduce such an estimate and how the implicit constant $C$ depends on these properties. There are a number of well known results along these lines for both solutions to elliptic equations and their gradients, such as the Three Spheres Theorem. There has been interest in understanding how propagation of smallness works for arbitrarily wild sets. It was shown in \cite{LM} that for solutions to divergence form elliptic equations with Lipschitz coefficients smallness propagates for the solutions from arbitrarily wild sets with positive $(n-1+\delta)$-dimensional Hausdorff content. The argument involved a novel induction on scales as well as some geometric-combinatorial lemmas from \cite{Log1}. For gradients of solutions to elliptic equations, it is believed that there should be even better propagation of smallness properties; that is, smallness should propagate from arbitrarily wild sets with positive $(n-2+\delta)$-dimensional Hausdorff content for any $\delta>0$. The authors of \cite{LM} also gave a modified argument for gradients which proved propagation of smallness from sets with positive $(n-1-c_n)$-dimensional Hausdorff content for some small constant $c_n>0$, but were not able to achieve the conjectured optimal propagation result.

In the two-dimensional case, we first use the various tools developed in the intermediate section to give a short proof of propagation of smallness for gradients of harmonic functions in the Euclidean case. This has also been studied using tools from potential theory in the past, see \cite[Theorem 2.1]{Mal} for instance. We then extend this to the case of Lipschitz Riemannian metrics by appealing to the existence theory of isothermal coordinates. In such coordinates, the metric is conformal to the Euclidean metric, allowing us to reduce to the harmonic case. The key is that we can control the derivative of the quasiconformal coordinate change map pointwise in terms of the Lipschitz constant $\lambda$ of our original elliptic equation, which can be seen through an analysis of the construction of the quasiconformal coordinate change via the solution of the Beltrami equation. This allows us to do a local propagation of smallness for solutions in coordinate charts, and a simple chaining argument allows us to extend the propagation of smallness globally. Our second main result is the following.

\begin{thm}\label{main prop}
Let $g$ be a Lipschitz Riemannian metric, where $|g_{ij}(x)-g_{ij}(y)|\le \lambda$ and $\lambda^{-1} |v|^2\le \sum_{i,j} g_{ij}(x)v_iv_j\le \lambda |v|^2$ hold for some $\lambda>0$ and all $x,y\in B_1(0)$ and all $v\in\BR^2$. Suppose that $\Delta_g u=0$ holds for some $u:B_1(0)\rightarrow\BR$ with $\Vert\nabla u\Vert_{L^{\infty}(B_1(0))}=1$. Suppose also that $|\nabla u|\le \epsilon$ on a set $E_{\epsilon}$ which has $\delta$-dimensional Hausdorff content (for some $\delta>0$) equal to $\beta>0$. Then
\begin{equation}
    |\nabla u(x)|\le C \epsilon^{\gamma}
\end{equation}
holds for all $x\in B_{1/2}(0)$, where $C,\gamma>0$ depend only on $\lambda, \delta,\beta$. 
\end{thm}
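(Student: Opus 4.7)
The plan is to first establish propagation of smallness in the Euclidean-harmonic case, and then transfer the result to Lipschitz Riemannian metrics via an isothermal change of coordinates.

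For the Euclidean case (where $u$ is harmonic in the usual sense on $B_1(0)$), the starting observation is that $f := \partial_x u - i \partial_y u$ is a holomorphic function on $B_1(0)$ satisfying $|f| = |\nabla u|$ pointwise. The statement therefore reduces to the following classical propagation-of-smallness fact for bounded holomorphic functions: if $f$ is holomorphic on $B_1$ with $\|f\|_{L^{\infty}(B_1)} = 1$ and $|f| \le \epsilon$ on a set $E$ with $\mathcal{H}^{\delta}_{\infty}(E) \ge \beta$, then $|f| \le C\epsilon^{\gamma}$ on $B_{1/2}$ for constants $C, \gamma > 0$ depending only on $\delta, \beta$. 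I would prove this via Frostman's lemma: extract a probability measure $\mu$ supported on $E$ satisfying $\mu(B_r(x)) \lesssim r^{\delta}/\beta$, then pair the subharmonic function $\log|f|$ against $\mu$. The key calculation is to use the Riesz representation $\log|f(z)| = \int \log|z-w|\, d\nu(w) + h(z)$, combined with the boundedness of the logarithmic potential $U^{\mu}$ (which follows from the Frostman growth of $\mu$), to control $\int \log|f|\, d\mu$ both from above (by $\log\epsilon$) and from below (in terms of $\sup_{B_{1/2}}\log|f|$, the normalization $\|f\|_{\infty} = 1$, and the constants $\delta, \beta$).

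For the Lipschitz Riemannian case, I would fix a point $z_0 \in \overline{B_{1/2}(0)}$ and introduce local isothermal coordinates on a ball $B_r(z_0)$ by solving the Beltrami equation $\bar{\partial} \Phi = \mu_g\, \partial\Phi$, where the Beltrami coefficient $\mu_g$ is read off from the complex-coordinate expression of $g$ and satisfies $\|\mu_g\|_{\infty} \le (\lambda-1)/(\lambda+1)$. Under such a $\Phi$, the metric becomes conformal to Euclidean, so $v := u \circ \Phi^{-1}$ is Euclidean-harmonic on $\Phi(B_r(z_0))$. Since $g$ is Lipschitz, $\mu_g$ is also Lipschitz, and a careful inspection of the Beltrami solution (differentiating the equation and applying Calder\'on-Zygmund / Schauder estimates) gives $C^{1,\alpha}$ regularity of both $\Phi$ and $\Phi^{-1}$, with pointwise derivative bounds depending only on $\lambda$. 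In particular $\Phi$ is bi-Lipschitz, so $\Phi(E_{\epsilon} \cap B_r(z_0))$ has positive $\delta$-dimensional Hausdorff content comparable to that of $E_{\epsilon} \cap B_r(z_0)$, and $|\nabla u| \asymp |\nabla v| \circ \Phi$ pointwise. Applying the Euclidean step to $v$ on $\Phi(B_r(z_0))$ then yields a local propagation of smallness on $B_{r/2}(z_0)$.

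For the global step, the subadditivity of Hausdorff content applied to a finite cover of $B_1(0)$ by balls of radius $r$ forces at least one such ball $B_r(z_{*})$ to satisfy $\mathcal{H}^{\delta}_{\infty}(E_{\epsilon} \cap B_r(z_{*})) \gtrsim \beta$. The local result then gives $|\nabla u| \le C\epsilon^{\gamma}$ on $B_{r/2}(z_{*})$, and I would chain this estimate out to every point of $B_{1/2}(0)$ in finitely many steps by iterating the local propagation of smallness: on each new ball, the previously controlled ball plays the role of a set of smallness with positive two-dimensional Hausdorff content. The principal obstacle is the pointwise derivative control on the isothermal coordinate change: while a general quasiconformal map of the disk is only H\"older continuous, the Lipschitz regularity of $\mu_g$ is precisely what should allow one to upgrade to bi-Lipschitz estimates with constants depending only on $\lambda$, and tracking these constants explicitly through the Calder\'on-Zygmund theory for the Beltrami equation is where the real work lies. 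Once these estimates are secured, the rest of the argument is a concatenation of classical tools.
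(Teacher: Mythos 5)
Your proposal is correct in outline, but the Euclidean core of your argument is genuinely different from the paper's. For the harmonic case the paper does not use Frostman measures or logarithmic potentials at all: it factors the holomorphic function $F=u_x-iu_y$ as $F=Pg$ with $P$ a monic polynomial whose roots are the zeros of $F$ and $g$ nonvanishing, bounds the Hausdorff content of sublevel sets of $P$ by Cartan's lemma (Lemma \ref{cover poly}), bounds $|g|$ from below via Harnack applied to $-\log|g|$ (Lemma \ref{lem: grad lower bd}), and concludes that smallness of $|F|$ on a set of content $\beta$ forces the frequency $\Lambda$ to satisfy $\Lambda\gtrsim a$; propagation of smallness then falls out of the definition of the frequency. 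Your two-constants/harmonic-measure route is the classical one (it is essentially the cited result of \cite{Mal}, Theorem 2.1, which the paper explicitly defers to), and it is perfectly valid provided you handle the usual boundary issues (restrict $E$ to a compactly contained subdisk retaining a fixed fraction of the content, and use Jensen's formula to control the mass of the Riesz measure $\nu$). What the paper's factorization buys instead is the intermediate statement ``smallness on a set of positive content implies a frequency lower bound,'' which is what ties this theorem to the volume estimates for superlevel sets of the frequency elsewhere in the paper; your route does not produce that byproduct.

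The reduction to the Euclidean case is essentially the paper's argument: isothermal coordinates via the Beltrami equation, bi-Lipschitz control of the coordinate change, pigeonholing the Hausdorff content onto one chart, and chaining using that a controlled ball has full-dimensional content. One caution on the step you correctly flag as the real work: Schauder/Calder\'on--Zygmund estimates for Lipschitz $\mu_g$ give an \emph{upper} bound on $|D\Phi|$, but bi-Lipschitz control also needs a \emph{lower} bound on the Jacobian, uniform in $\lambda$. The paper obtains this (Proposition \ref{deriv bd quasiconf}) by first normalizing $g(z_0)=I$ with a linear map and shrinking the chart so that $\|\mu\|_\infty\lesssim\lambda\eta$ is small; the explicit solution $\omega=z+\mathcal{C}[(I-\mu\mathcal{S})^{-1}\partial_z\mu]$ then has $D\omega$ a small perturbation of the identity, with eigenvalues in $[1/2,2]$. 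If you keep $\|\mu_g\|_\infty\le(\lambda-1)/(\lambda+1)$ without this small-ball normalization, you must supply the nondegeneracy of $\partial\Phi$ separately (e.g.\ via the representation $\partial\omega=e^{\sigma}$ with $\sigma$ bounded); otherwise the comparability of Hausdorff contents and of $|\nabla u|$ with $|\nabla v|\circ\Phi$ is not justified.
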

Here, we have given a local version of the result, but applying this in sufficiently small coordinate charts for a Lipschitz surfaces and chaining extends the result to a global propagation of smallness, with constants depending only on the surface itself.

Throughout the paper, we use the dimension 2 hypothesis in several fundamental ways. The first is the well known fact that if $u$ is a harmonic function on a subset of the plane, then the function $F(x+iy)=u_x(x,y)-iu_y(x,y)$ is a holomorphic function with $|F|=|\nabla u|$. This allows for the use of powerful tools from complex analysis. We mainly use the fact that the zeros of $F$ can be factored out, so that $F(z)=P(z)g(z)$ where $P(z)$ is a polynomial and $g(z)$ is nonvanishing. We can then study the frequency of polynomials and nonvanishing functions separately to understand the frequency of gradients of arbitrary harmonic functions. The dimension 2 hypothesis is also used in order to change to isothermal coordinates, as these do not exist in general in higher dimensions.

The structure of the paper is as follows. In section 2, we discuss the conventions for the frequency of a solution, giving slight variants on the normal definitions for technical reasons. In section 3, we study the frequency of solutions with nonvanishing gradients. In section 4, we study the level sets of the frequency of polynomials. In section 5, we use the previous sections to derive a quadratic bound (in terms of frequency) on the size of the superlevel sets of the frequency of a harmonic function in the Euclidean case. In section 6, we give a simple proof of the propagation of smallness result for gradients of harmonic functions in the Euclidean case. In section 7, we discuss how to extend the propagation of smallness for gradients and volume estimates for superlevel sets of the frequency to the more general case of harmonic functions on a Lipschitz surface. In section 8, we further extend this estimate to more general elliptic PDEs in divergence form with Lipschitz coefficients and a bounded measurable drift term. This relies on the fact that the gradient of a solution to such an elliptic equation also solves a divergence form elliptic equation, and the matter can be simplified using an isothermal change of coordinates. 

There is another recent paper by Zhu \cite{zhu2023remarks} which obtains a similar result to Theorem 2 of this paper for solutions to divergence-form elliptic equations without drift terms and appeared online shortly before the first version of this paper did. The methods of that paper helped inspire some of the arguments in section \ref{section gen eqn} in the second version of this paper.

The author is grateful to Eugenia Malinnikova for numerous valuable discussions while working on this project. The author completed part of this work while visiting the Hausdorff Research Institute for Mathematics, funded by the Deutsche Forschungsgemeinschaft (DFG, German Research Foundation) under Germany's Excellence Strategy – EXC-2047/1 – 390685813. The author is grateful for the institute's hospitality and support. The author also appreciates some funding from the NSF grant DMS-1956294.
 
\section{Conventions and Preliminaries}\label{sec: freq}
Throughout the paper, we will use $C$ to denote various constants. Unless otherwise specified, these will be universal in the harmonic setting, and these will depend only on the ellipticity and Lipschitz constants in the elliptic setting. We will also use the notation $A\lesssim B$ to denote $A\le CB$ for such constants $C$ when it is convenient. 

The key result will be a volume estimate on superlevel sets of the frequency. For technical reasons, we have slightly modified some of the definitions given in \cite{NV}. Given a nonconstant harmonic function $u$, we denote its frequency\footnote{More generally, if $u$ solves a divergence-form equation, then we can also use this formulation after making a linear change of coordinates so that the second order term is the Euclidean Laplacian at the point $x$} by
\begin{equation}
    N(x,r)=\log_2\frac{\dashint_{B_{2r}(x)}|\nabla u(y)|^2\,dy}{\dashint_{B_{r}(x)}|\nabla u(y)|^2\,dy}.
\end{equation}
If $u-u(x)=\sum_{d\ge 1} a_dP_d$ is the expansion into homogeneous harmonic polynomials about $x$, then the frequency can equivalently be expressed as
\begin{equation} \label{def freq sum}
    N(x,r)=\log_2 \frac{\sum_{d\ge 1} 2^{2d-2}da_d^2 r^{2d}}{\sum_{d\ge 1} da_d^2 r^{2d}}.
\end{equation}
Notice that the formulation \eqref{def freq sum} implies that frequency is monotone in $r$ for fixed $x$, just as is the case with the standard definition of frequency.
\begin{prop}\label{monotone}
$N(x,r)$ is monotone nondecreasing in $r$.
\end{prop}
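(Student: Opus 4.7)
The plan is to exploit the explicit series formula \eqref{def freq sum} and interpret $2^{N(x,r)}$ as the expectation of an increasing function of $d$ under a probability measure $\mu_r$ that itself shifts toward larger $d$ as $r$ grows. Writing $b_d = d a_d^2$ and $w_d(r) = b_d r^{2d}$, the formula reads
\begin{equation}
    2^{N(x,r)} = \frac{\sum_{d\ge 1} 4^{d-1} w_d(r)}{\sum_{d\ge 1} w_d(r)} = \mathbb{E}_{\mu_r}\!\left[4^{d-1}\right],
\end{equation}
where $\mu_r$ assigns probability $p_d(r) = w_d(r)/\sum_{d'} w_{d'}(r)$ to the integer $d$. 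Since $\log(2^{N(x,r)})$ is an increasing function of $2^{N(x,r)}$, it suffices to show $r \mapsto \mathbb{E}_{\mu_r}[4^{d-1}]$ is nondecreasing.

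Next I would differentiate directly. A short computation gives
\begin{equation}
    \frac{d}{dr} \log p_d(r) = \frac{2d}{r} - \frac{\sum_{d'} 2d' w_{d'}(r)}{r\sum_{d'} w_{d'}(r)} = \frac{2}{r}\bigl(d - \bar d(r)\bigr),
\end{equation}
where $\bar d(r) = \mathbb{E}_{\mu_r}[d]$. Therefore
\begin{equation}
    \frac{d}{dr}\,2^{N(x,r)} = \sum_{d\ge 1} 4^{d-1}\,\frac{d p_d(r)}{dr} = \frac{2}{r}\Bigl(\mathbb{E}_{\mu_r}[d\cdot 4^{d-1}] - \bar d(r)\,\mathbb{E}_{\mu_r}[4^{d-1}]\Bigr) = \frac{2}{r}\,\mathrm{Cov}_{\mu_r}\!\bigl(d,\,4^{d-1}\bigr).
\end{equation}

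The final step is to observe that both $d \mapsto d$ and $d \mapsto 4^{d-1}$ are nondecreasing functions of $d$, so by Chebyshev's sum inequality (equivalently, the FKG/correlation inequality for a one-dimensional probability measure) their covariance under any probability measure on $\mathbb{Z}_{\ge 1}$ is nonnegative. Hence $\frac{d}{dr} 2^{N(x,r)} \ge 0$, and monotonicity of $N(x,r)$ follows. I expect no real obstacle here: once the frequency is recognized as the log of an expectation of the increasing function $4^{d-1}$ under the measure $\mu_r$ whose weights $b_d r^{2d}$ are log-supermodular in $(d,r)$, monotonicity is essentially immediate from a correlation inequality. The one place to be slightly careful is to handle the possibility that only finitely many $a_d$ are nonzero (so that convergence of the series and differentiation under the sum are unproblematic) and to note that the argument works termwise without requiring regularity of $u$ beyond harmonicity.
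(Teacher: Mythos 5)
Your proof is correct and follows essentially the same route as the paper: both reduce monotonicity to Chebyshev's sum inequality applied to the nondecreasing functions $d\mapsto d$ and $d\mapsto 4^{d-1}$ under the probability measure with weights proportional to $d a_d^2 r^{2d}$. Your covariance formulation is just a cleaner repackaging of the same cross-multiplied inequality the paper verifies.
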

\begin{proof}
For convenience, we replace $r$ by $\sqrt{r}$. Differentiating in $r$ gives
\begin{equation}
    \log(2)\partial_r N(x,\sqrt{r})= \frac{\sum_{d\ge 1}4^{d-1}d^2a_d^2r^{d-1}}{\sum_{d\ge 1}4^{d-1}da_d^2r^{d}}-\frac{\sum_{d\ge 1}d^2a_d^2r^{d-1}}{\sum_{d\ge 1}da_d^2r^{d}}.
\end{equation}
Then monotonicity is equivalent to the inequality
\begin{equation}\label{monot eq}
    \left(\sum_{d\ge 1}4^{d-1}d^2a_d^2r^{d-1}\right)\left(\sum_{d\ge 1}da_d^2r^{d-1}\right) \ge \left(\sum_{d\ge 1}4^{d-1}da_d^2r^{d-1}\right)\left(\sum_{d\ge 1}d^2a_d^2r^{d-1}\right).
\end{equation}
We can then divide everything by $\left(\sum_{d\ge 1}da_d^2r^{d-1}\right)$ to normalize, so without loss of generality $\left(\sum_{d\ge 1}da_d^2r^{d-1}\right)=1$ and we have a probability measure on the positive integers via $p(d)=da_d^2r^{d-1}$. Recall Chebychev's inequality for sums, which says that for nondecreasing functions $f,g$ and probability measure $p$, we have
\begin{equation}
    \sum_{d} f(d)g(d)p(d)\ge \left(\sum_d f(d)p(d)\right) \left(\sum_d g(d) p(d)\right).
\end{equation}
Taking $f(d)=4^{d-1}$ and $g(d)=d$ then implies \eqref{monot eq}, completing the proof. 
\end{proof}

We have taken the logarithm with base 2 in the definition so that if $u-u(x)$ has vanishing order $j$ at a point $x$, then $N(x,r)\ge 2j-2$ for all $r$. In particular, we have that $x$ is in the critical set if and only if $\lim_{r\rightarrow 0}N(x,r)\ge 2$. 

In the following discussion, we will let $u$ be a solution to a more general elliptic equation $\Div(A\nabla u)=0$. In \cite{CNV}, the effective critical set at scale $r$ was (roughly) defined to be the set of points such that the solution $u$ was well approximated by a normalized linear function on the ball of radius $r$. Subsequently, in \cite{NV}, the effective critical set at scale $r$ was defined to be
\begin{equation}\label{def: eff crit set}
    \tilde{\mathcal{C}}_r(u)=\left\{x:r^2\inf_{y\in B_r(x)}|\nabla u(y)|^2< C \dashint_{\partial B_{2r}(x)}|u(y)-u(x)|^2\,dy\right\}.
\end{equation}
A short calculation shows that all points in the original definition of the effective critical set are also in $\tilde{\mathcal{C}}_r(u)$. As is discussed in \cite{NV}, for a suitable choice of the constant $C$, this is contained in the superlevel set
\begin{equation}
    \tilde{\mathcal{C}}_r(u)\subset \{x:N(x,r)>C\}.
\end{equation}
Also notice that $\tilde{\mathcal{C}_r}(u)$ trivially contains an $r$-tubular neighborhood of the critical set. For this reason, we are interested in getting Minkowski estimates for superlevel sets of the frequency.

We will also make use of the local equivalence of $L^2$ and $L^{\infty}$ norms for harmonic functions, that is
\begin{equation}\label{loc equiv}
    C_1\Vert u\Vert_{L^2(B_{1/2}(0))}\le \Vert u\Vert_{L^{\infty}(B_{1/2}(0))} \le C_2\Vert u\Vert_{L^2(B_{1}(0))}.
\end{equation}
In particular, we can apply this to the partial derivatives of harmonic functions to get analogous equivalences for gradients of harmonic functions.

We also recall the definition of Hausdorff content of a set. Unlike Hausdorff measure, Hausdorff content is always finite for bounded sets which makes it more suitable for propagating smallness quantitatively from open sets (such as sublevel sets of the gradient). Given $\delta>0$, the $\delta$-dimensional Hausdorff content of a set $E$ is defined to be
\begin{equation}
    \mathcal{C}_H^{\delta}(E)=\inf \sum_j r_j^\delta
\end{equation}
where the infimum is taken over all covers of $E$ by balls $B_{r_j}(x_j)$. Unlike the Hausdorff measure, we do not require the diameters of the covering sets to become arbitrarily small. 

Finally, we recall the definition of the weak $L^p$ spaces. For $0<p<\infty$ and a measure space $(X,\mu)$, we define the quasinorm
\begin{equation}\label{weak lp def}
\Vert f\Vert_{L^{p,\infty}(X,\mu)} = \sup_{\gamma>0}\{\gamma\mu(\{x:|f(x)|>\gamma\})^{1/p}\}.
\end{equation}
and the space $L^{p,\infty}$ consists of all measurable functions $f$ such that $\Vert f\Vert_{L^{p,\infty}(X,\mu)}$ is finite. It is known that when $p>1$ and $(X,\mu)$ is $\sigma$-finite, there is a norm $|||\cdot|||_{L^{p,\infty}(X,\mu)}$ on $L^{p,\infty}$ which is equivalent to to \eqref{weak lp def}, see \cite[pp. 14-15]{Grafakos}.

\section{Empty Superlevel Sets for Nonvanishing Gradients}
The first thing we want to show is that if the critical set is empty, then the frequency is at most $1/2$ at small scales. Before we make this precise, we prove the following lemma:
\begin{lem} \label{lem: grad lower bd}
Let $u$ be a harmonic function on $B_2(0)$ with $\Vert \nabla u\Vert_{L^{\infty}(B_1(0))}=1$. Suppose its frequency satisfies $N(0,1)\le\Lambda$ and assume $\Lambda$ is sufficiently large. If $|\nabla u|$ is nonvanishing on the unit ball, then
\begin{equation}
    \big|\log|\nabla u(x)|\big|\leq C\Lambda
\end{equation}
for all $x\in B_{1/2}(0)$, where $C$ is universal.
\end{lem}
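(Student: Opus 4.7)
The plan is to exploit the two-dimensional structure via the observation that $F := u_x - iu_y$ is holomorphic on $B_2(0)$ with $|F| = |\nabla u|$. The nonvanishing hypothesis implies $F$ has no zeros in $B_1(0)$, so $h := -\log|\nabla u|$ is harmonic there, and the normalization $\|\nabla u\|_{L^\infty(B_1(0))} = 1$ forces $h \ge 0$ on $B_1(0)$. The upper bound $\log|\nabla u| \le 0$ is therefore trivial, and the content of the lemma reduces to producing the estimate $\sup_{B_{1/2}(0)} h \lesssim \Lambda$.

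Since $h$ is a nonnegative harmonic function on $B_1(0)$, Harnack's inequality gives $\sup_{B_{1/2}(0)} h \lesssim \inf_{B_{1/2}(0)} h$, so it suffices to exhibit a single point $x_1 \in B_{1/2}(0)$ with $h(x_1) \lesssim \Lambda$, i.e.\ $|\nabla u(x_1)| \gtrsim 2^{-C\Lambda}$. I would produce such a point through an averaging argument driven by the frequency bound. First, applying the $L^\infty$-$L^2$ equivalence \eqref{loc equiv} to the harmonic components $u_x$ and $u_y$ on nested balls inside $B_2(0)$ yields
\begin{equation*}
\dashint_{B_2(0)} |\nabla u|^2 \;\gtrsim\; \|\nabla u\|_{L^\infty(B_1(0))}^2 \;=\; 1.
\end{equation*}
Next, the hypothesis $N(0,1)\le\Lambda$ extends by Proposition \ref{monotone} to $N(0,r)\le\Lambda$ for every $r\le 1$, which by the definition of the frequency is precisely the doubling estimate $\dashint_{B_{2r}}|\nabla u|^2\le 2^\Lambda\dashint_{B_r}|\nabla u|^2$. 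Applying this at $r=1$ and $r=1/2$ chains down to $\dashint_{B_{1/2}(0)}|\nabla u|^2 \gtrsim 2^{-2\Lambda}$, so some $x_1\in B_{1/2}(0)$ must satisfy $|\nabla u(x_1)|^2\gtrsim 2^{-2\Lambda}$, giving $h(x_1)\lesssim \Lambda$ as required.

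The proof does not appear to have a serious conceptual obstacle once the holomorphic reduction renders $-\log|\nabla u|$ harmonic on the nonvanishing set. The only real care required is organizational: ensuring that the various nested balls on which the $L^\infty$-$L^2$ equivalence, the frequency doubling, and Harnack's inequality are invoked are compatible (all sitting inside $B_2(0)$ with a little slack so that standard harmonic estimates may be applied), and checking that the hypothesis ``$\Lambda$ sufficiently large'' is used only to absorb universal additive constants into the single multiplicative factor of $\Lambda$ in the final bound.
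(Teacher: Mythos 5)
Your proposal is correct and follows essentially the same route as the paper's proof: both establish $\dashint_{B_2(0)}|\nabla u|^2\gtrsim 1$ via the $L^2$--$L^\infty$ equivalence, chain the doubling bound from the frequency hypothesis down to $B_{1/2}(0)$ to find a point where $-\log|\nabla u|\lesssim\Lambda$, and then apply Harnack's inequality to the nonnegative harmonic function $-\log|\nabla u|$. The only cosmetic difference is that you justify harmonicity of $-\log|\nabla u|$ explicitly via the holomorphic function $F=u_x-iu_y$, which the paper simply recalls as a known fact.
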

\begin{proof}
By monotonicity, we have that
\begin{equation}
    \log_2\left(\frac{\dashint_{B_{2}(0)}|\nabla u(y)|^2\,dy}{\dashint_{B_{1/2}(0)}|\nabla u(y)|^2\,dy}\right)=N(0,1)+N(0,1/2)\le 2\Lambda
\end{equation}
Since $u$ is harmonic, so is each partial derivative of $u$, so we have local equivalence of $L^2$ and $L^{\infty}$ norms up to doubling of the ball by \eqref{loc equiv}. In particular, this implies
\begin{equation}
    \dashint_{B_2(0)}|\nabla u|^2\ge c\Vert \nabla u\Vert_{L^{\infty}(B_1(0)}^2\ge c
\end{equation}
for some universal constant. We also can trivially upper bound the average over the half ball by the supremum. Together, these inequalities imply
\begin{equation}
    2\Lambda \ge \log(c) -\log\left(\sup_{B_{1/2}(0)}|\nabla u|\right)
\end{equation}
Rearranging implies
\begin{equation}
    \inf_{B_{1/2}(0)} \left(-\log|\nabla u(x)|\right) \le -\log(c)+2\Lambda
\end{equation}
Now, since $\nabla u$ is nonvanishing, recall that $-\log|\nabla u|$ is harmonic and by our normalization, it is nonnegative on the unit ball. Thus, applying Harnack's inequality on the half ball implies that the 
\begin{equation}
\sup_{B_{1/2}(0)}\lesssim \Lambda +C,
\end{equation}
giving the claim since $|\log|\nabla u(x)||=-\log|\nabla u(x)|$ and $\Lambda$ is large enough to absorb the additive constant.
\end{proof}
With this preliminary result established, it is not too difficult to show that if the critical set is empty, then so are large enough superlevel sets of the frequency, at least in the case where the gradient is nonvanishing.
\begin{prop} \label{prop: small scale freq bd}
Let $u$ be a harmonic function on the double of the unit ball with $|\nabla u(x)|>0$ everywhere on the unit ball. Suppose that the frequency bound $N(x,1)\le \Lambda$ holds across the unit ball for some sufficiently large $\Lambda>2$. Then there is a universal constant $c>0$ such that
\begin{equation}
N(x,c/\Lambda)\le \frac{1}{2}
\end{equation} for all $x$ in the unit ball. 
\end{prop}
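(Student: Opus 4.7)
\smallskip
\noindent\textbf{Proof proposal.} The plan is to reduce the statement to a pointwise gradient bound on the harmonic function $\log|\nabla u|$: if its gradient is of size $\Lambda$, then $|\nabla u|^2$ is essentially constant on balls of radius $\sim 1/\Lambda$, forcing the frequency at that scale to be small.

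First I would localize Lemma \ref{lem: grad lower bd} by rescaling. For each $x_0$ in the unit ball, set $M_{x_0} := \Vert\nabla u\Vert_{L^{\infty}(B_1(x_0))}$ and replace $u$ by $u/M_{x_0}$; the frequency is invariant under scalar multiplication, so the hypothesis $N(x_0,1)\le\Lambda$ is preserved, and Lemma \ref{lem: grad lower bd} applied at $x_0$ gives
\begin{equation*}
\bigl|\log|\nabla u(y)|-\log M_{x_0}\bigr|\lesssim \Lambda\qquad\text{for all }y\in B_{1/2}(x_0).
\end{equation*}
In particular the oscillation of $\log|\nabla u|$ on any such ball $B_{1/2}(x_0)$ is at most $C\Lambda$.

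Second, I would exploit harmonicity: the complex gradient $F(x+iy)=u_x-iu_y$ is holomorphic and, by hypothesis, nonvanishing on the unit ball, so $\log|F|=\log|\nabla u|$ is a well-defined harmonic function. Standard interior gradient estimates for harmonic functions, applied to $\log|\nabla u|$ minus its mean on $B_{1/2}(x_0)$ (which is bounded by the oscillation just obtained), then yield the pointwise bound
\begin{equation*}
\bigl|\nabla\log|\nabla u|(y)\bigr|\lesssim \Lambda\qquad\text{for all }y\in B_{1/4}(x_0).
\end{equation*}
Since $x_0$ was arbitrary in the unit ball, this bound holds uniformly on a slightly smaller ball.

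Finally, set $r=c/\Lambda$ with $c$ a small universal constant to be chosen. The mean value theorem and the pointwise gradient bound give
\begin{equation*}
\text{osc}_{B_{2r}(x)}\bigl(\log|\nabla u|^2\bigr)\le C'c,
\end{equation*}
so there is some $a>0$ with $a\le |\nabla u(y)|^2\le e^{C'c}\,a$ throughout $B_{2r}(x)$. Both averages $\dashint_{B_r(x)}|\nabla u|^2$ and $\dashint_{B_{2r}(x)}|\nabla u|^2$ lie in this same interval, so their ratio is at most $e^{C'c}$ and
\begin{equation*}
N(x,c/\Lambda)\le \log_2 e^{C'c}=\frac{C'c}{\log 2}.
\end{equation*}
Choosing $c$ sufficiently small makes the right-hand side at most $1/2$, as required.

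I do not anticipate a serious obstacle: the one place the two-dimensional hypothesis enters is in identifying $\log|\nabla u|$ as a harmonic function via the holomorphic factorization $F=u_x-iu_y$; granting that, the argument is a straightforward combination of Lemma \ref{lem: grad lower bd} with interior gradient estimates. The only care needed is to ensure that the passage from oscillation bounds to pointwise gradient bounds does not lose a factor of $\Lambda$, which it does not since we are working at a fixed scale.
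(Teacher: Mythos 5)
Your proposal is correct and follows essentially the same route as the paper: both hinge on Lemma \ref{lem: grad lower bd} to bound $\log|\nabla u|$ by $C\Lambda$, then use a regularity estimate for the harmonic function $\log|\nabla u|$ to show its oscillation on balls of radius $c/\Lambda$ is $O(c)$, forcing the frequency below $1/2$. The only (cosmetic) difference is that you invoke the interior gradient estimate on $\log|\nabla u|$ after a rescaled, localized application of the lemma, whereas the paper applies the quantitative Harnack inequality directly to the nonnegative harmonic function $-2\log|\nabla u|$.
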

\begin{proof}
Throughout, denote $v=-2\log|\nabla u|$ which is a nonnegative bounded harmonic function by hypothesis. We have that
\begin{equation}
    N(x,c/\Lambda)=\log_2\left(\frac{\dashint_{B_{2c/\Lambda}(x)} \exp(-v(y))\,dy}{\dashint_{B_{c/\Lambda}(x)}\exp(-v(y))\,dy}\right)=v(y_1)-v(y_2)
\end{equation}
for some points $y_j$ in $B_{jc/\Lambda}(x)$. We recall the following quantitative version of the Harnack inequality: if $w$ is positive and harmonic on the unit ball then
\begin{equation}
    \frac{1-|y|}{1+|y|}w(0)\le w(y) \le \frac{1+|y|}{1-|y|}w(0).
\end{equation}
Applying this at the points $y_1,y_2$ for $v$ and using that they are very close to $x$, we get that there is some universal constant $K$ such that the difference is controlled, i.e.
\begin{equation}
    |v(y_2)-v(y_1)|\le Kcv(x)/\Lambda
\end{equation}
However, Lemma \ref{lem: grad lower bd} showed that $v(x)\le C\Lambda$. Taking the original factor $c$ small enough, this implies that the frequency is at most $1/2$, as desired. 
\end{proof}
Note that for the volume estimates we want, we will always be able assume $\Lambda$ is large enough to satisfy the hypotheses in the results of this section.

\section{Holomorphic Polynomial Frequency Control}
As we are working in the two-dimensional case, we have the tools of complex analysis available. Given a harmonic function $u$, we can form a holomorphic function $F(x+iy)=u_x(x,y)-iu_y(x,y)$ with the property that $|F|=|\nabla u|$ everywhere. One useful tool we can utilize is that we can factor out roots of $F$. Since $F$ is a holomorphic function on the disk of radius 2, we can write
\begin{equation}
    F(z)=g(z)\prod_{j}(z-a_j)
\end{equation}
where $g(z)$ is nonvanishing in the unit disk and the $a_j$s are the roots of $F$ in the unit disk counted with multiplicity. We know the product is finite by analyticity. In the previous section, we already investigated the nonvanishing case, so we will now study the polynomial case.
\begin{prop}\label{prop: poly vol}
Let $P(z)=\prod_{j=1}^{\Lambda}(z-z_j)$ be a holomorphic polynomial on the disk, arising from a harmonic polynomial via $P\sim u_x-iu_y$. There is a universal constant $C$ such that the set where the frequency is greater than $1/2$ satisfies the following volume estimate:
\begin{equation}
    \vol(\{x\in B_{1/2}(0):N(x,r)>1/2\})\le C\Lambda^2 r^2.
\end{equation}
\end{prop}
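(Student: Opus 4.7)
The plan is to reduce the condition $N(x,r) > 1/2$ to a superlevel set of the scalar function $S(x) = \sum_{j=1}^{\Lambda} 1/|x-z_j|$, and then bound that superlevel set using the weak-$L^2$ estimate for $1/|x-z_j|$.

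First, I would Taylor-expand $|P(z)|^2$ around $x$. Using the orthogonality relation $\dashint_{B_\rho(x)} (z-x)^j \overline{(z-x)}^k \, dz = \delta_{jk}\, \rho^{2k}/(k+1)$, the cross-terms vanish and one gets the exact identity
\[
\dashint_{B_\rho(x)} |P(z)|^2\, dz = \sum_{k=0}^{\Lambda} |c_k|^2 \frac{\rho^{2k}}{k+1}, \qquad c_k = \frac{P^{(k)}(x)}{k!}.
\]
Outside the finite set $\{P=0\}$ (which has measure zero), divide through by $|c_0|^2$ and set $q_k(x) = c_k/c_0$. A short algebraic manipulation shows that $N(x,r) > 1/2$ is equivalent to
\[
\sum_{k\geq 1} |q_k(x)|^2 \frac{(2r)^{2k}}{k+1} > \sqrt{2}-1.
\]
The crucial identity is the elementary-symmetric-function formula $q_k(x) = e_k(v_1,\ldots,v_\Lambda)$ with $v_j = 1/(x-z_j)$; this comes from logarithmic differentiation of $P(z) = \prod_j(z-z_j)$.

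Next, I would use Maclaurin's inequality on the positive reals $w_j = |v_j| = 1/|x-z_j|$. Since $|q_k| \leq e_k(w_1,\ldots,w_\Lambda)$ by the triangle inequality, and Maclaurin's means $(e_k(w)/\binom{\Lambda}{k})^{1/k}$ are non-increasing, I obtain
\[
|q_k(x)|^2 \leq \binom{\Lambda}{k}^2 \bigl(S(x)/\Lambda\bigr)^{2k}, \qquad S(x) := \sum_j \frac{1}{|x-z_j|}.
\]
Using the crude bound $\binom{\Lambda}{k}^2/\Lambda^{2k} \leq 1/(k!)^2$, summation gives
\[
\sum_{k\geq 1}|q_k|^2 \frac{(2r)^{2k}}{k+1} \leq F\bigl(2rS(x)\bigr), \qquad F(u) := \sum_{k\geq 1}\frac{u^{2k}}{(k+1)(k!)^2}.
\]
The function $F$ is continuous, strictly increasing, and vanishes at $0$, so there is a universal $u_0 > 0$ with $F(u_0) = \sqrt{2}-1$ (numerically $u_0 \approx 0.86$). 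Consequently
\[
\{x \in B_{1/2}(0) : N(x,r) > 1/2\} \;\subseteq\; \{x \in B_{1/2}(0) : S(x) > u_0/(2r)\}.
\]

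Finally, I would close the argument with a weak-$L^2$ estimate. Each summand satisfies $\Vert 1/|x-z_j|\Vert_{L^{2,\infty}(B_{1/2}(0))} \leq \sqrt{\pi}$ directly from the distribution function, and because $L^{2,\infty}$ is normable (an equivalent norm exists for $p=2>1$, as noted in Section~\ref{sec: freq}), the sum $S$ lies in $L^{2,\infty}$ with quasinorm $\leq C\Lambda$. Therefore
\[
\vol\bigl(\{S > u_0/(2r)\} \cap B_{1/2}(0)\bigr) \;\leq\; \bigl(C\Lambda\bigr)^{2}\bigl(2r/u_0\bigr)^2 \;\leq\; C'\Lambda^2 r^2,
\]
which is the desired bound. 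The main obstacle I expect is step three: identifying Maclaurin's inequality as exactly the right tool to collapse all of the symmetric-function data into a single scalar quantity $S(x)$ without any spurious loss in $\Lambda$; once this reduction is in place, the weak-$L^2$ machinery handles the remainder cleanly. As a sanity check, the bound is tight for $P(z) = z^\Lambda$, where $S(x) = \Lambda/|x|$ forces $\{N>1/2\} \subseteq B_{C\Lambda r}(0)$ of volume $\sim \Lambda^2 r^2$.
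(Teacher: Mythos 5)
Your proof is correct, and it takes a genuinely different route from the paper's. The paper bounds $N(z_0,r)$ from above by $\log_2\bigl(\sup_{B_{2r}}|P|^2/\inf_{B_r}|P|^2\bigr)$, which forces it to first excise a union of balls of radius $4r$ around the roots (contributing volume $C\Lambda r^2$) before the pointwise bound $N(z_0,r)\lesssim \sum_j r/|z_0-z_j|$ becomes available; you instead exploit the exact orthogonality expansion $\dashint_{B_\rho(x)}|P|^2=\sum_k|c_k|^2\rho^{2k}/(k+1)$, identify the normalized Taylor coefficients as elementary symmetric functions of the $1/(x-z_j)$, and collapse them via Maclaurin's inequality to the single scalar $S(x)=\sum_j 1/|x-z_j|$ with no exceptional set and no restriction $r\lesssim\Lambda^{-1}$. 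Both arguments then finish identically with the weak-$L^2$ quasinorm of $S$ and normability of $L^{2,\infty}$. Your reduction is arguably cleaner and, as your $z^\Lambda$ check shows, lossless; what the paper's cruder sup/inf route buys is Corollary \ref{cor: poly vol} (the same volume bound for the superlevel set of $\log_2\sup_{B_{2r}}|P|^2/\inf_{B_r}|P|^2$), which is exactly the form reused in the proof of Proposition \ref{main eff harm} to handle the polynomial factor after splitting off the nonvanishing part --- your $L^2$-average version would need a small additional argument to serve that later purpose. Two cosmetic points: your claimed ``equivalence'' of $N(x,r)>1/2$ with $\sum_{k\ge1}|q_k|^2(2r)^{2k}/(k+1)>\sqrt2-1$ is really only the implication in the direction you use (since the denominator satisfies $B\ge1$ but can exceed $1$), and the whole computation should be restricted to $x\notin\{z_1,\dots,z_\Lambda\}$, a finite set of measure zero that does not affect the volume estimate.
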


\begin{proof}
Note that we may assume that $r<C\Lambda^{-1}$, as otherwise the bound is trivial due to the ball having finite measure. The frequency of $u$ is controlled by $\Lambda$, as can be seen by \eqref{def freq sum}. Fix a point $z_0$ in the unit disk which is distance at least $4r$ from each $z_j$. Then we have that for any $w\in B_{2 r}(z_0)$ and $z\in B_{r}(z_0)$
\begin{equation}
    \frac{|w-z_j|}{|z-z_j|}\leq 1+\frac{|w-z|}{|z-z_j|}\le 1+\frac{3r}{|z-z_j|}
\end{equation}
Optimizing in $w$ and $z$ leads to the estimate
\begin{align}
    N(z_0,r)&\leq \log_2 \frac{\sup_{w\in B_{2r}(z_0)} |P(w)|^2}{\inf_{z\in B_{r}(z_0)} |P(z)|^2} \\
    &\leq \sup_{z\in B_{r}(z_0)} 2\sum_{j=1}^{\Lambda}\log_2\left(1+\frac{3r}{|z-z_j|}\right) \\
    &\le \sup_{z\in B_{r}(z_0)} C\sum_{j=1}^{\Lambda}\frac{r}{|z-z_j|}
\end{align}
Let $B(r)$ be the union of balls of radius $4r$ centered at each $z_j$, so that $B(r)$ has volume at most $C\Lambda r^2$. Let $S(r)=B_1(0)\setminus B(r)$. We have by definition of the weak $L^2$ quasinorm that
\begin{equation}\label{eq: weak bound}
    \vol\left(\left\{z_0\in S(r):\sup_{z\in B_{r}(z_0)} \sum_{j=1}^{\Lambda}\frac{Cr}{|z-z_j|}\ge 1/2 \right\}\right)\le 4\left\Vert \sup_{z\in B_{r}(z_0)}\sum_{j=1}^{\Lambda}\frac{Cr}{|z-z_j|} \right\Vert_{L^{2,\infty}(S(r))}^2.
\end{equation}
 For a single term in the sum, we claim we have the estimate
\begin{equation}\label{weak l2}
\left\Vert \sup_{z\in B_{r}(z_0)}\frac{Cr}{|z-z_j|} \right\Vert_{L^{2,\infty}(S(r))}^2\le Cr^2.
\end{equation}
This can be seen by homogeneity and as a result of the well known fact that for any $w\in\BR^2$
\begin{equation}
    \left\Vert \frac{1}{|z-w|} \right\Vert_{L^{2,\infty}(\BR^2)}^2\le C,
\end{equation}
combined with the simple estimate
\begin{equation}
    \sup_{z\in B_{r}(z_0)}\frac{Cr}{|z-z_j|}\le \frac{C'r}{|z_0-z_j|},
\end{equation}
which is valid for all $z_0\in S(r)$. Thus, we deduce \eqref{weak l2}.

Since $L^{2,\infty}$ is normable, at the cost of a single multiplicative factor, we can apply the triangle inequality with arbitrarily many terms. In particular, this gives us the desired bound 
\begin{equation}\label{eq: final bound}
    \left\Vert \sup_{z\in B_{r}(z_0)}C\sum_{j=1}^{\Lambda}\frac{r}{|z-z_j|} \right\Vert_{L^{2,\infty}(S(r))}^2\le C\Lambda^2r^2.
\end{equation}
We have established that the frequency's superlevel set is contained in the union of balls $B(r)$, which has measure at most $C\Lambda r^2$, and the subset of $S(r)$ arising from the weak $L^2$ estimate which has measure at most $C\Lambda^2 r^2$, which gives the claim.
\end{proof}

This result can be seen to be sharp by considering monomials $z^{\Lambda}$ where $\Lambda$ is a positive integer. In fact, in the proof we showed the following slightly stronger result.

\begin{cor}\label{cor: poly vol}
Let $P(z)=\prod_{j=1}^{\Lambda}(z-z_j)$ be a holomorphic polynomial on the disk, arising from a harmonic polynomial via $P\sim u_x-iu_y$. There is a universal constant $C$ such that
\begin{equation}
    \vol\left(\{z_0\in B_{1/2}(0):\log_2 \frac{\sup_{w\in B_{2r}(z_0)} |P(w)|^2}{\inf_{z\in B_{r}(z_0)} |P(z)|^2}>1/2\}\right)\le C\Lambda^2 r^2.
\end{equation}
\end{cor}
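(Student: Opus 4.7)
The plan is simply to inspect the proof of Proposition \ref{prop: poly vol} and observe that the argument never actually used $N(z_0,r)$ on the left-hand side after the very first line: the subsequent estimates were all applied to the sup-inf ratio that now appears in the corollary, so the same computation yields the stronger conclusion for free.

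More precisely, the first step of the proof of Proposition \ref{prop: poly vol} establishes the chain
\begin{equation}
N(z_0,r) \;\le\; \log_2 \frac{\sup_{w\in B_{2r}(z_0)} |P(w)|^2}{\inf_{z\in B_{r}(z_0)} |P(z)|^2} \;\le\; \sup_{z\in B_{r}(z_0)} C\sum_{j=1}^{\Lambda}\frac{r}{|z-z_j|}
\end{equation}
by factoring $P$ and applying $\log_2(1+t)\lesssim t$. The first inequality of this chain will be discarded; what we keep is that on $S(r)=B_1(0)\setminus B(r)$, the log sup-inf ratio is bounded by the displayed sum. On $B(r)$ the sup-inf ratio may be large (indeed infinite if a root of $P$ lies in $B_r(z_0)$), but that set already has volume $\lesssim \Lambda r^2$, so it contributes only an $O(\Lambda^2 r^2)$ term.

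It then suffices to estimate the measure of
\begin{equation}
\left\{z_0\in S(r):\sup_{z\in B_{r}(z_0)} \sum_{j=1}^{\Lambda}\frac{Cr}{|z-z_j|}>1/2\right\},
\end{equation}
and this is exactly what the weak-$L^2$ calculation in \eqref{eq: weak bound}--\eqref{eq: final bound} controls, giving the upper bound $C\Lambda^2 r^2$. Adding the two contributions produces the claimed volume estimate. There is no obstacle here: the corollary is a direct reading of what the proof of Proposition \ref{prop: poly vol} actually bounded, so a one-sentence remark citing that proof suffices.
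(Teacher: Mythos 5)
Your proposal is correct and is exactly how the paper obtains this corollary: the text states it as the ``slightly stronger result'' already established in the proof of Proposition \ref{prop: poly vol}, since that argument bounds the sup--inf ratio on $S(r)$ via the weak-$L^2$ estimate and absorbs $B(r)$ (volume $\lesssim \Lambda r^2$) into the total. Nothing further is needed.
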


\section{More General Holomorphic Functions}
Finally, we must deal with the general case where we have a factorization $F(z)=g(z)P(z)$ where $g$ is nonvanishing in the unit ball and $P$ is a polynomial with roots in the unit ball. It will be convenient to assume that $F$ is defined on a sufficiently large ball centered at the origin. We will work with analogs of the doubling index for these; given a holomorphic function $F$, define
\begin{equation}
    N_F(z,r)=\log_2\frac{\dashint_{B_{2r}(0)} |F(z)|^2\,dz}{\dashint_{B_{r}(0)}|F(z)|^2\,dz}.
\end{equation}
We first recall the fact that in two dimensions, controlling the frequency on a large disk controls the number of zeroes on a smaller disk.
\begin{prop}[Theorem 3.4 in \cite{HanNodal}]\label{root bd}
Suppose $N_F(0,1)\le \Lambda$. Then there exists some universal $s>0$ such that $F$ has at most $2\Lambda$ roots on $B_s(0)$, counting multiplicities. 
\end{prop}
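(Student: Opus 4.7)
The plan is to convert the hypothesis $N_F(0,1)\le\Lambda$ into pointwise control on $|F|$ and then extract a zero count via Jensen's formula. A preliminary observation is that $r\mapsto N_F(0,r)$ is nondecreasing: the Taylor expansion $F(z)=\sum_{d\ge 0}a_d z^d$ together with a polar integration give $\dashint_{B_r(0)}|F|^2=\sum_d|a_d|^2 r^{2d}/(d+1)$, and the Chebyshev sum inequality from the proof of Proposition~\ref{monotone} applies verbatim. In particular $N_F(0,r)\le\Lambda$ for all $r\in(0,1]$, so both $\dashint_{B_2(0)}|F|^2\le 2^{\Lambda}\dashint_{B_1(0)}|F|^2$ and $\dashint_{B_1(0)}|F|^2\le 2^{\Lambda}\dashint_{B_{1/2}(0)}|F|^2$ hold.

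Next I would promote these averaged bounds to pointwise ones. Subharmonicity of $|F|^2$ gives $\sup_{B_{3/2}(0)}|F|^2\lesssim\dashint_{B_2(0)}|F|^2\le 2^{\Lambda}\dashint_{B_1(0)}|F|^2$, while the circular averages $r\mapsto\dashint_{\partial B_r(0)}|F|^2$ are nondecreasing (again by subharmonicity), so $\dashint_{\partial B_{1/2}(0)}|F|^2\ge\dashint_{B_{1/2}(0)}|F|^2\ge 2^{-\Lambda}\dashint_{B_1(0)}|F|^2$. Picking $z_0\in\partial B_{1/2}(0)$ where $|F|$ attains its maximum on that circle gives $|F(z_0)|^2\ge 2^{-\Lambda}\dashint_{B_1(0)}|F|^2$.

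Now apply Jensen's formula to $F$ on the disk $B_1(z_0)\subset B_{3/2}(0)$: with $w_1,\dots,w_m$ denoting the zeros of $F$ in that disk counted with multiplicity,
\[
\sum_{k=1}^m\log\frac{1}{|w_k-z_0|}=\frac{1}{2\pi}\int_0^{2\pi}\log|F(z_0+e^{i\theta})|\,d\theta-\log|F(z_0)|.
\]
By the concavity of $\log$ and the two pointwise estimates above, the right side is bounded by $\tfrac{1}{2}\log\bigl(\sup_{B_{3/2}(0)}|F|^2/|F(z_0)|^2\bigr)\le\Lambda\log 2+O(1)$. For $s<1/2$, any zero of $F$ in $B_s(0)$ lies in $B_1(z_0)$ with $|w_k-z_0|\le s+1/2$, contributing at least $\log\bigl(2/(1+2s)\bigr)$ to the left side. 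Hence $\#\{\text{zeros of }F\text{ in }B_s(0)\}\le(\Lambda\log 2+O(1))/\log(2/(1+2s))$, which reduces to the claimed $2\Lambda$ bound after choosing $s$ a sufficiently small universal constant.

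The main subtlety is the bookkeeping of universal constants so that the coefficient of $\Lambda$ in the final zero count does not exceed $2$. Since the ratio $\log 2/\log(2/(1+2s))$ tends to $1$ as $s\to 0$, shrinking $s$ brings the coefficient below $2$ with ample slack, and the additive $O(1)$ error is similarly absorbed; the regime of bounded $\Lambda$ is handled by decreasing $s$ further so that the geometric confinement of any zero into a tiny $B_s(0)$ is incompatible with the frequency bound on $B_1$.
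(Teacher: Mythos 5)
The paper cites this result from \cite{HanNodal} without giving a proof, so there is nothing internal to compare against; your Jensen-formula route is the standard argument for such zero-counting statements and is surely the intended one. The core of your argument is sound: monotonicity of $N_F(0,\cdot)$ via the Taylor expansion $\dashint_{B_r}|F|^2=\sum_d|a_d|^2r^{2d}/(d+1)$ and Chebyshev's sum inequality is correct, the promotion to the pointwise bounds $\sup_{B_{3/2}}|F|^2\le C\,2^{\Lambda}\dashint_{B_1}|F|^2$ and $|F(z_0)|^2\ge 2^{-\Lambda}\dashint_{B_1}|F|^2$ is correct, and Jensen's formula then gives $\#\{\text{zeros in }B_s(0)\}\le\bigl(\Lambda\log 2+C_0\bigr)/\log\bigl(2/(1+2s)\bigr)$ with $C_0$ a universal constant (e.g. $C_0=\tfrac12\log 16$ from the mean-value step).

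The gap is in the last paragraph. Shrinking $s$ drives the multiplicative factor $\log 2/\log(2/(1+2s))$ to $1$, but it does nothing to the additive constant $C_0$, which comes from the fixed sub-mean-value constant on fixed annuli and is independent of $s$. So the bound you actually obtain is of the form $\Lambda+C_1+o_s(1)$, which is $\le 2\Lambda$ only once $\Lambda\ge C_1+o_s(1)$; for $\Lambda$ below this universal threshold your argument does not give $2\Lambda$, and your proposed remedy (``decreasing $s$ further'') cannot work because the obstruction is not the geometric factor. This is harmless for the paper's applications (Lemma \ref{lem: freq bd for nonzero factor} and its sequels all assume $\Lambda$ large), but the proposition as stated has no such hypothesis. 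The bounded-$\Lambda$ regime needs a separate argument. One that works: for $\Lambda<2$, the hypothesis reads $\sum_d 4^d b_d\le 2^{\Lambda}\sum_d b_d$ with $b_d=|a_d|^2/(d+1)$, which forces $\sum_{d\ge1}b_d\le\frac{2^{\Lambda}-1}{4-2^{\Lambda}}\,b_0$; Cauchy--Schwarz then gives $|F(z)-a_0|\le\bigl(\frac{2^{\Lambda}-1}{4-2^{\Lambda}}\bigr)^{1/2}\bigl(\sum_{d\ge1}(d+1)s^{2d}\bigr)^{1/2}|a_0|<|a_0|$ on $B_s(0)$ for $s$ small, so $F$ has no zeros there and the claim holds vacuously. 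Combining this with your Jensen bound for $\Lambda\ge 2$ (where $2\Lambda\ge\Lambda+2$, using integrality of the zero count at the borderline) closes the argument; as written, though, the small-$\Lambda$ case is not established.
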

As a result of this, we get a bound on the degree of $P$ in terms of $\Lambda$. We first claim a quasi-subadditivity property for the frequency given such a factorization $F=Pg$, where $P$ is monic.
\begin{lem}\label{lem: freq bd for nonzero factor}
Suppose $N_F(0,2t)\le \Lambda$ where $t=\max\{1/s,5\}$ with $s$ as in Proposition \ref{root bd}, $\Lambda$ is sufficiently large, and $F=Pg$ is a factorization into a monic polynomial whose roots all lie in $B_1(0)$ and a holomorphic function which is nonvanishing on $B_1(0)$. Then there exists some universal $C>0$ such that $N_g(0,t)\le C\Lambda$.
\end{lem}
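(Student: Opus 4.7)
The plan is to compare the averages of $|g|^2$ on $B_t(0)$ and $B_{2t}(0)$ to the corresponding averages of $|F|^2$, exploiting the fact that $|P|^2$ is pointwise well-controlled on $B_{2t}$ away from its roots, all of which lie in the tiny disk $B_1 \subset B_{2t}$ (since $t \ge 5$). The two key ingredients are the degree bound $d := \deg P \le 2\Lambda$, which follows from Proposition \ref{root bd} applied to $\tilde F(z) = F(2tz)$ (using $t \ge 1/s$ to guarantee $B_{2ts}(0) \supset B_1(0)$), and the subharmonicity of $|g|^2$ on $B_{4t}$, which holds because $g = F/P$ extends holomorphically across the zeros of $P$.

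First I would bound $\dashint_{B_{2t}} |g|^2$ by $\dashint_{B_{2t}}|F|^2$. On the annulus $B_{2t} \setminus B_2$, for each root $z_j$ of $P$ one has $|z - z_j| \ge |z| - 1 \ge 1$, so $|P(z)|^2 \ge 1$, and therefore $|g|^2 \le |F|^2$ pointwise on this annulus. To handle $B_2$, where this bound fails, I invoke the subharmonic mean value inequality: for $z \in B_2$ the ball $B_{2t-2}(z)$ lies in $B_{2t}(0)$, so
\[
|g(z)|^2 \le \frac{1}{|B_{2t-2}|}\int_{B_{2t}(0)} |g|^2.
\]
Integrating over $B_2$ gives $\int_{B_2}|g|^2 \le (t-1)^{-2}\int_{B_{2t}}|g|^2 \le \tfrac{1}{16}\int_{B_{2t}}|g|^2$ by $t \ge 5$. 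Combining with the pointwise bound on the annulus,
\[
\tfrac{15}{16}\int_{B_{2t}}|g|^2 \le \int_{B_{2t} \setminus B_2}|F|^2 \le \int_{B_{2t}}|F|^2,
\]
so $\dashint_{B_{2t}}|g|^2 \le \tfrac{16}{15}\dashint_{B_{2t}}|F|^2$.

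Next I would lower bound $\dashint_{B_t}|g|^2$ by combining monotonicity of $N_F$ (Proposition \ref{monotone}) with a trivial upper bound for $|P|^2$ on $B_t$. Since $N_F(0,t) \le N_F(0,2t) \le \Lambda$ and $|z - z_j| \le t+1$ for $z \in B_t$,
\[
\dashint_{B_{2t}}|F|^2 \le 2^{\Lambda}\dashint_{B_t}|F|^2 \le 2^{\Lambda}\sup_{B_t}|P|^2 \cdot \dashint_{B_t}|g|^2 \le 2^{\Lambda}(t+1)^{2d}\dashint_{B_t}|g|^2.
\]
Chaining the two steps and taking $\log_2$,
\[
N_g(0,t) \le \Lambda + 2d\log_2(t+1) + \log_2(16/15) \le [1 + 4\log_2(t+1)]\Lambda + O(1),
\]
which gives $N_g(0,t) \le C\Lambda$ for a universal $C$ (depending only on $t$, hence only on $s$) once $\Lambda$ is taken large enough to absorb the additive constant.

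The delicate step is controlling the $B_2$ contribution to $\int_{B_{2t}}|g|^2$, since a priori $|g|^2 = |F|^2/|P|^2$ could blow up near the roots of $P$. Subharmonicity of $|g|^2$ together with the volume ratio $|B_2|/|B_{2t-2}| = (t-1)^{-2} \le 1/16$ is precisely what makes this contribution a small multiple of the total and allows absorption into the left-hand side. Everything else is a routine chaining of pointwise inequalities with the frequency monotonicity and the root count from Proposition \ref{root bd}.
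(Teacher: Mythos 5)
Your proof is correct and follows essentially the same route as the paper: bound $\deg P\le 2\Lambda$ via Proposition \ref{root bd}, control $|P|$ pointwise from below outside $B_2(0)$ and from above on $B_t(0)$, and use subharmonicity of $|g|^2$ together with monotonicity of the frequency to conclude. The only difference is bookkeeping: the paper passes to the annuli $B_{4t}\setminus B_{2t}$ and $B_t\setminus B_{t/2}$ (paying $3\Lambda$ from monotonicity), whereas you absorb the $B_2$ contribution of $\int|g|^2$ into the left-hand side, which is slightly tidier but not a different idea.
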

\begin{proof}
We have by the previous lemma that the degree of $P$ is at most $2\Lambda$. Outside of the ball of radius $2<t/2$, we have that $1<|z-a_j|$ for any root $a_j$ of $P$ and inside the ball of radius $k$, we have $|z-a_j|\le k+1$. We estimate
\begin{align}
    2^{3\Lambda}&\ge\frac{\dashint_{B_{4t}(0)} |P(z)|^2|g(z)|^2\,dz}{\dashint_{B_{t/2}(0)}|P(z)|^2|g(z)|^2\,dz} \\
    &\ge c \frac{\int_{B_{4t}(0)-B_{2t}(0)} |P(z)|^2|g(z)|^2\,dz}{\int_{B_{t}(0)-B_{t/2}(0)}|P(z)|^2|g(z)|^2\,dz} \\
    &\ge \frac{c}{(t+1)^{4\Lambda}} \frac{\int_{B_{4t}(0)-B_{2t}(0)} |g(z)|^2\,dz}{\int_{B_{t}(0)-B_{t/2}(0)}|g(z)|^2\,dz} \\
    &\ge \frac{c}{(t+1)^{4\Lambda}} \frac{\int_{B_{2t}(0)} |g(z)|^2\,dz}{\int_{B_{t}(0)}|g(z)|^2\,dz} \\
    &\ge \frac{c}{(t+1)^{4\Lambda}} 2^{N_g(0,t)}
\end{align}
Here, we used subharmonicity to apply the mean value inequality, allowing us to control integrals over the half ball by integrals over a larger annulus. We also used the fact that outside the ball of radius 2, we have that $|P(z)|\ge 1$ and inside the ball of radius $k$, we have $|P(z)|\le (k+1)^{2\Lambda}$. The claim follows by rearranging and taking $\Lambda$ sufficiently large compared to $c$.
\end{proof}

We now have tools for understanding the superlevel sets of frequency for polynomials, nonvanishing functions, and how to combine these estimates for more general functions. We are ready to prove Theorem \ref{main eff} in the harmonic case.

\begin{prop} \label{main eff harm}
Suppose $u$ is a harmonic function on the disk of radius $4s$ for some suitably large universal constant $s$. Assume its frequency satisfies $N_u(0,2s)\le \Lambda$. Then we have the following volume estimate for the frequency superlevel sets
\begin{equation} 
    \vol(\{x:N(x,r)>1\}\cap B_{1/2}(0))\le C\Lambda^2 r^2
\end{equation}
for some universal constant $C$.
\end{prop}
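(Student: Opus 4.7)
The plan is to exploit the complex-analytic factorization $F(z) = u_x - i u_y = P(z) g(z)$, where $P$ is the monic polynomial whose roots are the zeros of $F$ in $B_1(0)$ (counted with multiplicity) and $g$ is a holomorphic function nonvanishing on $B_1(0)$. Since $|F|^2 = |\nabla u|^2$, the frequency $N(x,r)$ of $u$ agrees with the corresponding $L^2$-frequency $N_F(x,r)$ of $F$, so the whole analysis can be carried out on $F$ using the tools built in the previous sections.

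First I would combine the two factor-wise frequency bounds. By monotonicity, $N_F(0,1) \le \Lambda$, so Proposition \ref{root bd} gives $\deg P \le 2\Lambda$ with all zeros in $B_s(0)$, and Lemma \ref{lem: freq bd for nonzero factor} gives $N_g(0,t) \le C\Lambda$ at the corresponding scale $t$. The key observation is the sub-additivity
\begin{equation*}
N_F(x,r) \le \log_2\frac{\sup_{B_{2r}(x)}|P|^2}{\inf_{B_r(x)}|P|^2} + \log_2\frac{\sup_{B_{2r}(x)}|g|^2}{\inf_{B_r(x)}|g|^2},
\end{equation*}
which follows by sandwiching each $L^2$-average between its sup and inf and using $|F|^2 = |P|^2|g|^2$. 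Thus $N_F(x,r) > 1$ forces at least one of the two right-hand terms to exceed $1/2$.

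Corollary \ref{cor: poly vol} applied to $P$ immediately bounds the measure of the set in $B_{1/2}(0)$ where the $P$-term exceeds $1/2$ by $C\Lambda^2 r^2$ (absorbing the constant arising from $\deg P \le 2\Lambda$). For the $g$-term, I would adapt the argument of Proposition \ref{prop: small scale freq bd}, using that $\log|g|$ is harmonic on $B_1(0)$. After a suitable normalization, $v = A - 2\log|g|$ is positive and harmonic on a ball containing $B_{1/2}(0)$, and the analogue of Lemma \ref{lem: grad lower bd} applied to $g$ gives $|v(x)| \lesssim \Lambda$ uniformly on $B_{1/2}(0)$. Applying the quantitative Harnack inequality to $v$ on the pair $B_r(x) \subset B_{2r}(x)$ yields
\begin{equation*}
\log_2\frac{\sup_{B_{2r}(x)}|g|^2}{\inf_{B_r(x)}|g|^2} = \sup_{B_{2r}(x)} v - \inf_{B_r(x)} v \lesssim r\,v(x) \lesssim r\Lambda,
\end{equation*}
which is at most $1/2$ once $r \le c/\Lambda$; so the $g$ bad-set is empty at these scales. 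For $r > c/\Lambda$ the desired bound $C\Lambda^2 r^2$ dominates $|B_{1/2}(0)|$ and the conclusion is automatic.

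The main technical obstacle is the sup/inf refinement of Proposition \ref{prop: small scale freq bd}: the statement there only compared $L^2$-averages, whereas the sub-additivity above requires genuine sup/inf control on $|g|^2$ across nested balls. Quantitative Harnack applied to the positive harmonic function $v$ supplies this, but one must be careful to verify the pointwise bound $v(x) \lesssim \Lambda$ at \emph{every} $x \in B_{1/2}(0)$ rather than just at the origin, which is exactly what the uniform Lemma \ref{lem: grad lower bd}-type argument provides. Once this refinement and the precise bookkeeping of scales in the factorization are in place, the proof is a direct combination of the polynomial and nonvanishing cases.
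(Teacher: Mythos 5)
Your proposal is correct and follows essentially the same route as the paper: factor $F=Pg$, control the nonvanishing factor at scales $r\lesssim\Lambda^{-1}$ via Lemma \ref{lem: grad lower bd} and the Harnack argument of Proposition \ref{prop: small scale freq bd}, control the polynomial factor via Corollary \ref{cor: poly vol}, and combine by sub-multiplicativity. The only (harmless) variation is that you bound the $g$-contribution by a sup/inf ratio where the paper keeps it as the $L^2$-average ratio $N_g(z,r)$, but the same Harnack oscillation estimate handles either version.
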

\begin{proof}
As usual, define the holomorphic function $F(x+iy)=u_x(x,y)-iu_y(x,y)$. Based on our previous remarks, $N_F(0,2s)\le \Lambda$ and it suffices to show the desired volume estimate for the set of points where $N_F(z,r)>1$. Moreover, we may assume $r<c\Lambda^{-1}$ as otherwise, the volume estimate holds just by taking $C$ large enough since $B_{1/2}(0)$ is a set of finite measure. Factor $F(z)=P(z)g(z)$ where $P$ is a polynomial with all its roots in the unit disk and degree at most $2\Lambda$ and $g$ is nonvanishing in the unit disk. We have that $N_P(0,1)\le C\Lambda$, and it follows by Lemma \ref{lem: freq bd for nonzero factor} that $N_g(0,1)\le C\Lambda$. For simplicity, let $\mathcal{C}(u)$ denote only the critical set of $u$ inside the unit disk. Proposition \ref{prop: small scale freq bd} implies that if we choose $c$ small enough so that $r<c\Lambda^{-1}$ is sufficiently small, then $N_g(z,r)<1/2$ for any $z$ in the disk of radius $1/2$. Now, we will reuse the estimate on $P$ from the polynomial case. We have for $z$ in $B_{1/2}(0)$ that
\begin{align}
    2^{N_F(z,r)}&=\frac{\dashint_{B_{2r}(z)} |P(w)|^2|g(w)|^2\,dw}{\dashint_{B_{r}(z)}|P(w)|^2|g(w)|^2\,dw} \\
    &\le \frac{\dashint_{B_{2r}(z)} |g(w)|^2\,dw\sup_{B_{2r}(z)}|P(w)|^2}{\dashint_{B_{r}(z)}|g(w)|^2\,dw\inf_{B_r(z)}|P(w)|^2} \\
    &\le 2^{N_g(z,r)}\frac{\sup_{B_{2r}(z)}|P(w)|^2}{\inf_{B_r(z)}|P(w)|^2}.
\end{align}
Taking logarithms, the frequency from $g$ contributes at most $1/2$, and by Corollary \ref{cor: poly vol}, we see that we can bound the contribution from the polynomial to the frequency to be at most $1/2$ on everywhere except on a set of measure $C\Lambda^2r^2$. In particular, this implies that we have
\begin{equation}
    N_F(z,r)\le 1
\end{equation}
holds for all $z$ in $B_{1/2}(0)$ outside of a set of measure $C\Lambda^2r^2$, which gives the desired result.
\end{proof}
Moreover, this estimate is sharp in $\Lambda$. This can be seen by considering the holomorphic function $F(z)=\exp(\Lambda z)$, which arises as the gradient of the harmonic function $\Lambda^{-1}\exp(\Lambda x)\cos(\Lambda y)$. It can be seen to have frequency comparable to $\Lambda$ at unit scale, and its frequency is the full unit ball when $r=C/\Lambda$ for a suitable choice of constant. In fact, its frequency function is independent of the point chosen and depends only on the scale $r$; an easy estimate can lead to the lower bound $N(z,r)\ge \Lambda r/2-C$ for any $z$, so as $r\rightarrow 0$, there is a transition between the superlevel set $\{z: N(z,r)\ge 1\}$ being the full ball and being empty below $r=C'/\Lambda$. Thus, comparing this with \eqref{def: eff crit set}, we see the power of $\Lambda$ cannot be improved in the volume estimate.

\section{Propagation of Smallness for Gradients of Harmonic Functions}
With the tools developed in previous sections, we can prove the desired propagation of smallness for gradients of harmonic functions. Given a function $f$, we will use the following notation for its sublevel set
\begin{equation}
    E_a(f)=\{z:|f(z)|<e^{-a}\}
\end{equation}
We will need the following result of Cartan concerning polynomials.

\begin{lem}[\cite{Cartan}]\label{cover poly}
Let $P$ be a monic polynomial of degree $n$. Then for any $\delta,a>0$, we have that there is a finite collections of balls $\{B_j\}$ with radii $r_j$ covering $E_a(P)\cap B_1(0)$ and
\begin{equation}
    \sum_j r_j^{\delta}\le Ce^{-a\delta/n}
\end{equation}
\end{lem}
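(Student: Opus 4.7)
The plan is to reduce directly to the classical Cartan polynomial lemma (H.\ Cartan, 1928; cf.\ Levin's \emph{Lectures on Entire Functions}), which asserts: for any monic polynomial $P$ of degree $n$ and any $H>0$, the lemniscate $\{z\in\BC:|P(z)|\le H^n\}$ is covered by at most $n$ disks whose radii $\rho_1,\dots,\rho_m$ (with $m\le n$) satisfy $\sum_{k=1}^m \rho_k\le 2eH$.

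First I would apply this classical result with the choice $H=e^{-a/n}$, so that $H^n=e^{-a}$, to obtain a cover of $E_a(P)=\{|P|<e^{-a}\}$ by at most $n$ disks $B_{\rho_k}(c_k)$ with $\sum_k\rho_k\le 2e\,e^{-a/n}$. Since one only needs to cover $E_a(P)\cap B_1(0)$, any disk disjoint from $B_1(0)$ may be discarded, and any remaining $\rho_k$ exceeding $1$ may be replaced by $1$ (together with recentering inside $B_1(0)$); neither operation increases $\sum_k \rho_k^\delta$.

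Next I would convert the bound on $\sum_k\rho_k$ into a bound on $\sum_k\rho_k^\delta$ by splitting on $\delta$. For $\delta\ge 1$, the superadditivity of $t\mapsto t^\delta$ on $[0,\infty)$ gives
\begin{equation}
\sum_k\rho_k^\delta\le\left(\sum_k\rho_k\right)^\delta\le (2e)^\delta e^{-a\delta/n}.
\end{equation}
For $0<\delta<1$, the function $t\mapsto t^\delta$ is concave and vanishes at $0$, so Jensen applied to the at most $n$ summands yields
\begin{equation}
\sum_k\rho_k^\delta\le m^{1-\delta}\left(\sum_k\rho_k\right)^\delta\le n^{1-\delta}(2e)^\delta e^{-a\delta/n},
\end{equation}
which is of the claimed form once the implicit constant $C$ is allowed to depend on $\delta$ (and on $n$ in the subcritical range; in the later applications $n$ is tied to the polynomial factor arising from the degree count of Proposition \ref{root bd} and is therefore bounded by a multiple of the frequency, which is harmless).

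The main obstacle is the classical Cartan lemma itself, whose proof proceeds by the greedy clustering of the roots of $P$ into nested absorbing disks via an inductive elimination argument; this is standard and I would import it as a black box. Everything after that invocation is elementary convexity algebra together with the observation that truncating to $B_1(0)$ only helps.
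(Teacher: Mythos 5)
The paper offers no proof of this lemma --- it is quoted directly from the cited work of Cartan --- so the only question is whether your reduction to the classical ($\alpha=1$) Cartan lemma actually delivers the stated bound. For $\delta\ge 1$ it does, via the superadditivity step. For $0<\delta<1$, however, there is a genuine gap: the Jensen/H\"older step costs a factor $m^{1-\delta}\le n^{1-\delta}$, so what you actually prove is $\sum_j r_j^\delta\le C_\delta\, n^{1-\delta}e^{-a\delta/n}$ rather than $\sum_j r_j^\delta\le C_\delta\, e^{-a\delta/n}$. Your remark that the $n$-dependence is ``harmless'' because $n\lesssim\Lambda$ is exactly backwards: $\Lambda$ is the quantity being bounded from below in the application, so a constant growing with $n$ weakens the conclusion. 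Concretely, in the proof of Theorem \ref{thm: harm prop} the first term of \eqref{key prop eq} is forced below $\beta/2$ when $n\le\epsilon a$ by choosing $\epsilon$ small \emph{uniformly in $n$}; with the constant $Cn^{1-\delta}$ the dichotomy only yields $a\lesssim n\log n\lesssim\Lambda\log\Lambda$, hence $\Lambda\gtrsim a/\log a$, and the propagation estimate degrades to $|\nabla u|\le Ce^{-\gamma a/\log a}$. Since the whole point of the theorem is the regime of small $\delta$ (sets of low Hausdorff content), the case you lose is the main one.

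The repair is that the exponent $\delta$ must be built into Cartan's greedy construction rather than applied after the fact. In the standard proof one absorbs $p$ roots into a disk of radius $\lambda p$; replacing this by a disk of radius $\lambda p^{1/\delta}$ gives disks with $\sum_j r_j^\delta=\lambda^\delta\sum_j p_j\le\lambda^\delta n$, while outside the (suitably dilated) disks the $k$-th nearest root lies at distance $\gtrsim\lambda k^{1/\delta}$, so that $|P(z)|\gtrsim\lambda^n(n!)^{1/\delta}\ge\lambda^n(n/e)^{n/\delta}$. Calibrating $\lambda$ so that this lower bound equals $e^{-a}$ gives $\lambda^\delta n\le e\,e^{-a\delta/n}$, i.e.\ a constant depending only on $\delta$ and not on $n$. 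This $\delta$-weighted form of the lemma is the content of the cited reference and is the correct black box to import; the rest of your argument (the choice $H=e^{-a/n}$, discarding disks missing $B_1(0)$, and the $\delta\ge1$ case) is fine.
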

In particular, we have the following immediate corollary for a polynomial $P$ of degree $n$
\begin{equation}
    \mathcal{C}_{\mathcal{H}}^{\delta}\left(E_a(P)\cap B_1(0)\right)\le Ce^{-a\delta/n}.
\end{equation}
From here, obtaining the propagation of smallness is fairly straightforward.

\begin{thm}\label{thm: harm prop}
Let $u$ be a harmonic function on the double of the unit disk, and assume its gradient has $\Vert \nabla u\Vert_{L^{\infty}(B_1(0))}=1$. Suppose that
\begin{equation}
    \beta\le \mathcal{C}_{\mathcal{H}}^{\delta}(E_a(|\nabla u|)\cap B_1(0))
\end{equation}
for some $\delta,a>0$. Then we have that
\begin{equation}
    |\nabla u(x)|\le Ce^{-\gamma a}
\end{equation}
for all $x$ in the disk of radius $1/2$ and some $\gamma,C>0$ depending only on $\beta,\delta$.
\end{thm}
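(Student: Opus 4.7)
I would pass to the holomorphic function $F(x+iy) = u_x(x,y) - iu_y(x,y)$, which satisfies $|F| = |\nabla u|$, is holomorphic on the double disk, and has $\|F\|_{L^\infty(B_1(0))} = 1$. The hypothesis becomes $\mathcal{C}_H^\delta(\{|F| \le e^{-a}\} \cap B_1(0)) \ge \beta$, and the goal is $|F(z)| \le Ce^{-\gamma a}$ on $B_{1/2}(0)$, which I may prove under the standing assumption that $a$ is sufficiently large (the low-$a$ case being trivial by choosing $C$ large). Since $F$ is holomorphic with only finitely many zeros in $B_1(0)$ by analyticity, I factor $F = Pg$ on $B_1(0)$, where $P$ is a monic polynomial of degree $n$ with roots equal to these zeros and $g$ is a nonvanishing holomorphic function on $B_1(0)$.

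The first main step applies Cartan's Lemma \ref{cover poly} to $P$ at a level $b$ chosen so that $Ce^{-b\delta/n} = \beta/2$, i.e., $b = \tfrac{n}{\delta}\log(2C/\beta)$. This gives $\mathcal{C}_H^\delta(\{|P| < e^{-b}\} \cap B_1(0)) \le \beta/2$, so by subadditivity of the $\delta$-dimensional Hausdorff content, the set $A := E_a(|\nabla u|) \setminus \{|P| < e^{-b}\}$ has content at least $\beta/2$, and on $A$ the factorization yields $|g(z)| = |F(z)|/|P(z)| \le e^{b-a}$. Now $\log|g|$ is harmonic on $B_1(0)$ because $g$ is nonvanishing, with $\log|g| \le b-a$ on $A$. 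I would then invoke a standard propagation-of-smallness result for harmonic functions from sets of positive Hausdorff content (essentially a two-constants-type theorem combined with the capacity bounds implicit in Cartan's estimate) to obtain
\begin{equation*}
    \log|g(z)| \le \gamma_0(b-a) + (1-\gamma_0)M \qquad \text{on } B_{1/2}(0),
\end{equation*}
with $\gamma_0 \in (0,1)$ depending only on $\beta, \delta$, and $M$ an upper bound for $\log|g|$ on $B_{3/4}(0)$; such an $M$ is produced from $|g| \le 1/|P|$ together with a second application of Cartan at a fixed scale to lower-bound $|P|$ off a small set.

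Combining with the trivial bound $|P(z)| \le 2^n$ on $B_{1/2}(0)$ (valid since $P$ is monic of degree $n$ with roots in $B_1(0)$) gives $\log|F(z)| = \log|P(z)| + \log|g(z)| \le n\log 2 + \gamma_0(b-a) + (1-\gamma_0)M$ on $B_{1/2}(0)$. Because both $b$ and $M$ are linear in $n$, this rearranges to $\log|F(z)| \le -\gamma_0 a + O_{\beta,\delta}(n)$. The principal obstacle is that the degree $n$ is not a priori controlled by $\beta, \delta$: I would resolve this by splitting cases. When $n$ is small relative to $a$, the $O(n)$ term is absorbed into a fraction of $\gamma_0 a$ and we conclude directly. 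When $n$ is large, the polynomial factor $P$ itself provides the decay on $B_{1/2}(0)$, as in the model example $F = z^n$, where the hypothesis $\mathcal{C}_H^\delta(E_a) \ge \beta$ forces $n \gtrsim_{\beta, \delta} a$ and so $|F(z)| \le 2^{-n} \le e^{-\gamma a}$ on $B_{1/2}(0)$ directly from Cartan applied to the polynomial level set. Balancing the two Cartan scales across these regimes yields the final exponent $\gamma > 0$ depending only on $\beta$ and $\delta$.
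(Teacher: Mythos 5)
Your overall skeleton matches the paper's: pass to $F=u_x-iu_y$, factor $F=Pg$ with $P$ monic and $g$ nonvanishing, use Cartan's lemma to discard a small-content portion of $E_a$ where $|P|$ is tiny, deduce that $|g|$ must be small on a set of content $\ge\beta/2$, and split on the size of the degree $n$ relative to $a$. In the small-degree branch your route differs from the paper's in one respect: you propagate the upper bound for the harmonic function $\log|g|$ by a two-constants/harmonic-measure argument (positive $\delta$-content $\Rightarrow$ positive logarithmic capacity), whereas the paper uses the pointwise Harnack lower bound $|g|\ge e^{-c\rho}$ (Lemma \ref{lem: grad lower bd}) so that it only needs the set $\{|g|<e^{-a/2}\}$ to be \emph{nonempty} to force the frequency of $g$, and hence of $F$, to be $\gtrsim a$. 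Your version is heavier but workable in principle, provided you quantify the capacity lower bound and note that the conclusion you actually extract is a lower bound on the frequency of $F$ (which is what converts into decay on $B_{1/2}$), not merely an upper bound on $\log|g|$ alone.

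The genuine gap is the large-degree branch. The assertion that ``the polynomial factor $P$ itself provides the decay on $B_{1/2}(0)$'' is false: a monic polynomial with all roots in the closed unit disk need not be small anywhere on $B_{1/2}(0)$ --- for $P(z)=(z-1)^n$ one has $|P(-1/2)|=(3/2)^n$, which is exponentially \emph{large} --- and Cartan's lemma only bounds the measure of sublevel sets of $P$; it gives no upper bound for $|P|$ or $|F|$. Your final inequality $\log|F|\le-\gamma_0 a+O_{\beta,\delta}(n)$ is therefore vacuous precisely when $n\gtrsim a$, and the proposed fix does not close it. What is actually needed in this regime is the implication ``many zeros of $F$ in the unit disk $\Rightarrow$ large doubling index/frequency of $F$'' (a Jensen-formula argument, the converse direction of Proposition \ref{root bd}, which crucially uses that $F$ extends holomorphically to $B_2(0)$ --- for functions defined only on $B_1(0)$ the claim fails, as Blaschke products with zeros near $\partial B_1(0)$ show), followed by the conversion of a large frequency plus the normalization $\Vert F\Vert_{L^{\infty}(B_1(0))}=1$ into exponential decay on $B_{1/2}(0)$. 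This is exactly how the paper disposes of the case $n\gtrsim a$ (``then $\Lambda\gtrsim a$ as well''), and your write-up is missing both halves of that step.
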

\begin{proof}
As usual, consider the holomorphic function $F(z)=u_x(x+iy)-iu_y(x+iy)$. Let $N_F(0,1)=\Lambda$. We claim that $\Lambda\gtrsim a$; then the conclusion of the theorem follows immediately by the normalization of $F$ in the hypotheses, the definition of frequency, and the local equivalence of $L^2$ and $L^\infty$ norms. Take the usual factorization $F(z)=P(z)g(z)$ where $P$ is a monic polynomial whose roots lie in the unit disk and $g$ is nonvanishing on the unit disk. We have the simple containment
\begin{align*}
    \{z\in B_1(0):|F(z)|<e^{-a}\}\subset &
    \,\,\{z\in B_1(0):|P(z)|<e^{-a/2}\}\\
    & \qquad\cup\{z\in B_1(0):|g(z)|<e^{-a/2}\}.
\end{align*}
Taking the Hausdorff content and applying Lemma \ref{cover poly}, we have that
\begin{equation} \label{key prop eq}
    \beta\le Ce^{-\delta a/n} +\mathcal{C}_{\mathcal{H}}^{\delta}(\{z\in B_1(0):|g(z)|<e^{-a/2}\})
\end{equation}
where $n\le C\Lambda$ is the degree of $P$. If $n\gtrsim a$, then we are done, as then $\Lambda \gtrsim a$ as well. If not, then the first term is at most $\beta/2$ (for fixed $\beta>0$ where the implied constant is chosen suitably), and we instead consider the frequency of $g$, call it $\rho$, which is at most $C\Lambda$. We have by Lemma \ref{lem: grad lower bd} that
\begin{equation}
    |g(z)|\ge e^{-c\rho}.
\end{equation}
In order for \eqref{key prop eq} to hold, we must have $\rho\gtrsim a$, which in turn implies by Lemma \ref{lem: freq bd for nonzero factor} that $\Lambda\gtrsim a$, as desired.
\end{proof}

\section{Proofs of Theorems \ref{main eff} and \ref{main prop} on Surfaces}
As we are in the two-dimensional setting, isothermal coordinates provide a convenient way to extend the volume bounds for the superlevel sets of the frequency and the propagation of smallness to the Laplace-Beltrami operator on a closed Riemannian surface with at least Lipschitz regularity. On any Riemannian surface, there exist local coordinates called isothermal coordinates\footnote{See \cite{Chern} for more about isothermal coordinates} in which the Laplace-Beltrami operator takes the form of a scalar multiple of the Euclidean Laplacian. In our propagation of smallness, the constant $C$ obtained will also depend on the ellipticity and Lipschitz constants of the surface's metric, but we suppress this dependence from the notation in order to be more concise. The idea is simple; cover the surface with small coordinate charts such that the map changing to isothermal coordinates has bi-Lipschitz constant close to 1. Consequently, the frequency in the new coordinates does not change much and we can apply the result from the harmonic case on each small ball. The number of charts needed is bounded in terms of the ellipticity and Lipschitz constants of the manifold. This leads to the desired local volume bound for the superlevel sets of the frequency. The propagation of smallness for gradients can be deduced by using the fact that the sizes of the sublevel sets of the gradient of the solution aren't distorted too much when passing to the isothermal coordinates, and a chaining argument can be done to propagate smallness to the entire surface. We can also obtain local bounds on the size of the superlevel sets of the frequency using similar techniques.

In local coordinates, we recall that the Laplace-Beltrami operator for a metric $g$ is a divergence-form elliptic operator given by
\begin{equation}\label{metric for op}
\Delta_gu=\frac{1}{\sqrt{\det g}}\partial_i(\sqrt{\det g}g^{ij}\partial_ju),
\end{equation}
where we are using the Einstein summation convention and $g^{ij}$ denotes the $(i,j)$th component of the inverse of the metric tensor. The key tool is the following result, which controls distortion of the geometry in all directions under the coordinate change, provided the metric is Lipschitz close to being Euclidean.
\begin{prop}\label{deriv bd quasiconf}
Let $g$ be a Lipschitz Riemannian metric on $B_{\eta}(0)\subset \BR^2$ with Lipschitz constant controlled by $\lambda$ and $g(0)=I$ is the Euclidean metric. Assume $\eta$ is sufficiently small (depending on $\lambda$) and let $F:B_{\eta}(0)\tilde\rightarrow U_{\eta}\subset \BR^2$ be the map given by the isothermal change of coordinates. Then the eigenvalues of the differential $DF$ lie in the interval $[1/2,2]$.
\end{prop}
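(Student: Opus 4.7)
The plan is to construct $F$ explicitly as a normalized solution of a Beltrami equation whose coefficient is small near the origin, and to read off the bound on $DF$ from this smallness. Encode the conformal class of $g$ via the Beltrami coefficient
\begin{equation}
\mu(z) = \frac{g_{11}(z) - g_{22}(z) + 2ig_{12}(z)}{g_{11}(z) + g_{22}(z) + 2\sqrt{\det g(z)}},
\end{equation}
so that a diffeomorphism $F$ yields an isothermal coordinate chart precisely when it satisfies $\bar\partial F = \mu\, \partial F$. Since $g(0) = I$, $\mu(0) = 0$, and the $\lambda$-Lipschitz condition on $g$ gives the pointwise estimate $|\mu(z)| \le C\lambda|z| \le C\lambda\eta$ on $B_\eta(0)$. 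Taking $\eta$ sufficiently small in terms of $\lambda$, I can arrange $\|\mu\|_{L^\infty(B_\eta)} \le 1/8$.

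Next, I would normalize the solution by $F(0) = 0$, $\partial F(0) = 1$, and $\bar\partial F(0) = 0$, which forces $DF(0) = I$; the Ahlfors--Bers theorem (measurable Riemann mapping theorem) provides such a normalized $F$. Writing $h = \partial F - 1$, the Beltrami equation rearranges into the fixed-point equation $h = \mathcal{B}[\mu(1+h)]$, where $\mathcal{B} = -\partial\, \bar\partial^{-1}$ is the Beurling transform. Because $\mathcal{B}$ is bounded on $L^p(\BR^2)$ for every $p \in (1, \infty)$ and $\|\mu\|_\infty$ is small, this equation is contractive in $L^p$; choosing $p > 2$ and combining with the Lipschitz regularity of $\mu$ via standard Calder\'on--Zygmund theory shows that $h$ is H\"older continuous near the origin with $h(0) = 0$.

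With these facts in hand, $\partial F$ is close to $1$ and $\bar\partial F = \mu\,\partial F$ is close to $0$ in a neighborhood of $0$, so after possibly shrinking $\eta$ we obtain $|\partial F - 1| \le 1/8$ and $|\bar\partial F| \le 1/8$ throughout $B_\eta(0)$. The singular values of $DF$---which for an orientation-preserving planar map are $|\partial F| \pm |\bar\partial F|$---then lie in $[3/4, 9/8] \subset [1/2, 2]$; and since $DF$ is in this regime close enough to the identity that any complex eigenvalues would contradict the small size of $DF - I$, the eigenvalues are real and satisfy the same containment. The main obstacle is the continuity of $\partial F$ at the origin: a bare $L^\infty$ smallness bound on $\mu$ yields only $L^p$ control on $h$ with no pointwise information, so the argument genuinely requires the Lipschitz regularity of $\mu$ paired with the $L^p$ theory of the Beurling transform, and it is exactly here that the hypothesis $g(0) = I$ (rather than merely bounded ellipticity) gets used.
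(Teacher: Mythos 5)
Your overall architecture matches the paper's: encode the metric in a Beltrami coefficient $\mu$ with $\mu(0)=0$ and $\|\mu\|_{L^\infty(B_\eta)}\le C(\lambda)\eta$, solve $\bar\partial F=\mu\,\partial F$ via the Beurling/Cauchy transform machinery, and conclude that $DF$ is close to the identity. Where you diverge is in the one step that carries all the difficulty: converting smallness of $\mu$ into a \emph{pointwise} bound $|\partial F-1|\le 1/8$ on $B_\eta$. You set up the fixed-point equation $h=\mathcal{S}[\mu(1+h)]$ for $h=\partial F-1$, correctly observe that this is contractive in $L^p$ and that $L^p$ control gives no pointwise information, and then assert that "Lipschitz regularity of $\mu$ via standard Calder\'on--Zygmund theory shows that $h$ is H\"older continuous near the origin." This is the gap. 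The natural way to make that assertion quantitative would be to invert $I-\mu\mathcal{S}$ on a H\"older space, but the Neumann series there requires $\|\mu\|_{C^{0,1}}\,\|\mathcal{S}\|_{C^\alpha\to C^\alpha}<1$, and only $\|\mu\|_{L^\infty}$ is small -- the Lipschitz seminorm of $\mu$ is a fixed constant $C(\lambda)$, not small in $\eta$. So the contraction argument does not upgrade to H\"older norms, and without a bound on the H\"older seminorm of $h$ that is uniform in $\eta$ and controlled by $\lambda$, the final step "$h(0)=0$ plus H\"older continuity, hence $|h|\le 1/8$ after shrinking $\eta$" does not close. (The classical Schauder theory for the Beltrami equation does give $\partial F\in C^\alpha$ when $\mu\in C^{0,1}$, but its proof goes through precisely the representation you are implicitly avoiding.)

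The paper sidesteps this by using the exponential representation $\partial\omega=e^{\sigma}$ with $\sigma=\mathcal{C}[(I-\mu\mathcal{S})^{-1}\partial_z\mu]$. The point is that $\partial_z\mu$ is bounded by $C(\lambda)$ and supported in $B_{2\eta}(0)$, so $\|\partial_z\mu\|_{L^p}\le C(\lambda)\eta^{2/p}$; the resolvent $(I-\mu\mathcal{S})^{-1}$ is bounded on $L^p$ using only $\|\mu\|_{L^\infty}<1$; and the Cauchy transform maps $L^p\cap L^q$ into $C_0$ with norm controlled by the geometric mean of the two norms, yielding $\|\sigma\|_{L^\infty}\le C(\lambda)\eta$ directly. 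That is the quantitative $L^\infty$ bound your argument needs and does not supply. Two smaller points: (i) your remark that complex eigenvalues "would contradict the small size of $DF-I$" is false -- $I+\epsilon J$ with $J$ a rotation generator has complex eigenvalues -- although the conclusion you want still follows, since every eigenvalue $t$ of $DF$ satisfies $|t-1|\le\|DF-I\|_{\mathrm{op}}$; the paper instead does an explicit case analysis on the discriminant. (ii) Your normalization $\partial F(0)=1$ is achievable by rescaling the principal solution, but then $h$ no longer satisfies exactly the fixed-point equation you wrote; this is harmless bookkeeping once the pointwise bound is in hand, but it presupposes it.
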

\begin{proof}
The proof will rely on the fact that the existence of isothermal coordinates on surfaces can be rephrased in terms of solvability of the Beltrami equation. We will use many results from \cite{Astala} in this proof, and we will frequently view $\BR^2$ as $\BC$, as in previous sections of the paper. If the metric $g$ takes the matrix form
\[
g=\begin{bmatrix} E & F \\ F & G\end{bmatrix},
\] 
then for
\begin{equation}
    \mu=\frac{E-G+2iF}{E+G+2\sqrt{EG-F^2}},
\end{equation}
the existence of isothermal coordinates is equivalent to finding a homeomorphic solution to
\begin{equation}
    \partial_{\bar{z}}\omega=\mu\partial_z\omega.
\end{equation}
Here, we are extending the definition of $\mu$ to all of $\BC$ by taking a Lipschitz extension with the same Lipschitz constant and $L^{\infty}$ norm and $\mu=0$ outside $B_{2\eta}(0)$ (note that we can do both of these simultaneously since $\mu(0)=0$, such as by interpolating linearly between the values on $\partial B_{\eta}(0)$ and 0 on radial segments). Thus, we have that
\begin{equation}
    |\tilde{\mu}(x)|\le C(\lambda)\eta \chi_{B_{2\eta(0)}},
\end{equation}
where $\chi_E$ denotes the indicator function of the set $E$. The result \cite[Equation 5.25]{Astala} on distortion of Lebesgue measure under quasiconformal mappings with $p=2$ says that
\begin{equation}
    \mathcal{H}^2(B_{2\eta(0)})\le C(\lambda) \eta^2.
\end{equation}
In particular, notice that by ellipticity and the definition of $\mu$, it follows that the Lipschitz constant of $\mu$ depends only on $\lambda$. It is discussed in Chapter 5 of \cite{Astala} (see specifically Theorem 5.1.1, equation (5.4) and the proof of Theorem 5.2.3) that when $|\mu|<1$, then such a solution is given by
\begin{equation}
    \omega(z)=z+\mathcal{C}[(I-\mu\mathcal{S})^{-1}\partial_z\mu]:=z+\sigma(z),
\end{equation}
where $\mathcal{C}$ denotes the Cauchy transform on $\BC$ and $\mathcal{S}$ denotes the Beurling transform. We do not define these here, as we will only be using their $L^p$ boundedness properties as a black box, but a more complete exposition can be found in \cite[Chapter 4]{Astala}. One can compute that the derivative of this mapping (with respect to $z,\bar{z}$) is given by 
\begin{equation}
    D\omega(z)=\begin{bmatrix}
    e^{\sigma} & \mu e^{\sigma} \\
    \overline{\mu e^{\sigma}} & \overline{e^{\sigma}}
    \end{bmatrix}.
\end{equation}
The eigenvalues of this mapping are $\Real(e^{\sigma})\pm (\Real(e^{\sigma})^2+(|\mu|^2-1)|e^{2\sigma}|)^{1/2}$. When $\eta=c(\lambda)$ is sufficiently small, then we have $|\mu|<c'<<1$ is very small (due to the metric being Lipschitz close to the Euclidean metric on the ball), and we will first show that $|\sigma|$ is also small when this happens.

To this end, notice that $|\partial_z\mu(z)|\le C(\lambda)$ holds almost everywhere by Rademacher's theorem and that $\partial_z\mu$ is supported in $B_{2\eta}(0)$. Thus, for $1<p<\infty$, we have that
\begin{equation}
    \Vert \partial_z \mu\Vert_{L^p}\le C(\lambda) \eta^{2/p}.
\end{equation}
The Beurling transform is bounded on $L^p$ for $1<p<\infty$ by the theory of Calder\'on-Zygmund operators (see \cite[Theorem 4.5.3]{Astala}), and its resolvent $(I-\mu \mathcal{S})^{-1}$ is also bounded on $L^p$ with operator norm controlled by $C_p'(1-|\mu|)^{-1}$ (see \cite[Chapter 5]{Astala}) which we can bound in terms of $\lambda$ when $\eta$ is small enough. Thus, we have that
\begin{equation}
    \Vert(I-\mu\mathcal{S})^{-1}\partial_z\mu\Vert_{L^p}\le C_p'\eta^{2/p}C(\lambda)
\end{equation}
holds for all $1<p<\infty$. However, it is shown in \cite[Theorem 4.3.11]{Astala} that the Cauchy transform maps $L^p(\BC)\cap L^q(\BC)\rightarrow C_0(\BC)$ for conjugate pairs $1<q<2<p<\infty$ with the estimate
\begin{equation}
    \Vert \mathcal{C}\phi\Vert_{L^{\infty}}\le \frac{2}{\sqrt{2-q}}\sqrt{\Vert\phi\Vert_{L^p}\Vert\phi\Vert_{L^q}}.
\end{equation}
Taking $\phi=(I-\mu\mathcal{S})^{-1}\partial_z\mu$, we deduce the estimate
\begin{equation}\label{sigma small}
    \Vert \sigma\Vert_{L^{\infty}}\le C_{p,q}'C(\lambda)\eta
\end{equation}
We can fix $q=3/2$ and take $\eta=(10C_{p,q}'C(\lambda))^{-1}$ to conclude that $|\sigma|$ is sufficiently small.

With this established, we return to getting a lower bound on the modulus of the eigenvalues. Recall throughout that we have upper and lower bounds on $|e^{\sigma}|$ by the above argument. We will consider cases based on if the eigenvalues are real or complex and on whether $e^{\sigma}$ has a large or small real part.

\textbf{Case 1}: Suppose that the eigenvalues are complex, i.e. that
\[
\Real(e^{\sigma})^2+(|\mu|^2-1)|e^{2\sigma}|<0.
\]
If $|\Real(e^{\sigma})|^2>c_{\lambda}|e^{2\sigma}|$ for some constant $c_{\lambda}$ depending only on $\lambda$, then we are done since we have a lower bound on the real part of the eigenvalue and hence also on its modulus. Thus, we may assume the inequality fails for some small constant $c_\lambda$. Taking it smaller than $(1-|\mu|^2)/2$ (where $|\mu|$ depends on $\lambda$), we see that we can lower bound the imaginary part of the eigenvalue, completing this case.

\textbf{Case 2}: Suppose that the eigenvalues are real, i.e. that
\[
\Real(e^{\sigma})^2+(|\mu|^2-1)|e^{2\sigma}|\ge 0.
\]
In the case where the discriminant is 0, we can write $\Real(e^{\sigma})$ as a multiple of $|e^{\sigma}|$ and get the desired lower bound, so we can assume the discriminant is positive. In general, it follows by \eqref{sigma small} that
\[
\Real(e^{\sigma})^2\ge e^{2(1-c\eta)},
\]
so we just need to show that the summand arising from the discriminant is not too close to this quantity. If the discriminant is less than $c_{\lambda}\Real(e^{\sigma})^2$ for $c_{\lambda}<1$ then we are done and get a lower bound of $(1-c_{\lambda})\Real(e^{\sigma})^2$. Fix $c_{\lambda}=|\mu|^2$. Then if the previous discriminant estimate fails, we can derive that
\[
\Real(e^{\sigma})^2>\frac{1-|\mu|^2}{1-c_\lambda}|e^{2\sigma}|=|e^{2\sigma}|
\]
which is a contradiction. Thus, we always have the desired lower bound on the modulus of the eigenvalues.
\end{proof}
With this established, the propagation of smallness follows in a straightforward fashion.
\begin{proof}[Proof of Theorem \ref{main prop}]
This is already known in the case where the operator is the Euclidean Laplacian, so it suffices to reduce to that case. Around each point $x$ in $B_{1/2}(0)$, consider an ellipsoidal disk (that is, the set of points on and inside an ellipse) with major axis having length at most $\eta(2\lambda)^{-1}$, where $\eta$ is as in the previous lemma, and the ellipse is chosen so that the linear change of coordinates taking $g(x)$ to the identity matrix sends the corresponding boundary ellipse to the circle of radius $\eta/2$. By subadditivity of the Hausdorff content, we can select one of these ellipsoidal disks $\Theta_0$ with the property that $\mathcal{C}_{H}^{\delta}(\Theta_0\cap E_{\epsilon})\ge c(\lambda)\beta$. We can pick $C(\lambda)$ many points on the boundary of $\Theta_0$ and take the corresponding ellipsoidal disks, $\Theta_{1,j}$s, so that $(1+c(\lambda))\Theta_0$ is covered by $\Theta_0$ and the $\Theta_{1,j}$s (this is a consequence of ellipticity). We can then iterate this procedure until $B_{1/2}(0)$ is covered by $C(\lambda)$ many ellipsoidal disks. Moreover, in this construction, we can ensure by ellipticity that we have a lower bound on the fraction of each ellipsoidal disk that overlaps with the previous disks, call it $b(\lambda)$. On $\Theta_0$, we can consider its double and make a linear change of coordinates so that the double becomes a ball of radius at most $\eta$. Let $\tilde{u}$ denote the harmonic function obtained by composing with the isothermal change of coordinates $F$. This satisfies
\[
|\nabla \tilde{u}(F^{-1}(x))|\le 2|\nabla u(x)|
\]
so it follows that $E_{a-\log 2}(\tilde{u})\supset F(E_{a}(u))$. Control of the eigenvalues of $DF$ tells us that we still have a lower bound
\[
\mathcal{H}^{\delta}(E_{a-\log 2}(\tilde{u}))\ge C \beta;
\]
hence, by Theorem \ref{thm: harm prop}, we have that
\[
|\nabla \tilde{u}(x)| \le Ce^{\gamma(-a+\log 2)}
\]
for all $x$ in $B_{\eta/2}$. We will absorb the $\log 2$ into the multiplicative constant $C$. Now, $F^{-1}(B_{\eta/2})$ covers $c\Theta_0$ for some universal $c<1$. Thus, we have that 
\begin{equation}\label{first prop}
|\nabla u(x)|\le C'e^{-\gamma a}
\end{equation}
for all $x\in c\Theta_0$. We can chain this argument to get that the same inequality holds throughout $\Theta_0$ with worse values of $C',\gamma$. 

Now, consider the doubles of each of the $\Theta_{1,j}$s. We have that for all $x$ in a set with $\delta$-Hausdorff content bounded below by $C(\lambda)$ that \eqref{first prop} holds. This as a result of the fact that we ensured that there is a lower bound on the overlap between the various ellipsoidal disks. We can propagate smallness using the same argument as for $\Theta_0$, getting more factors in the constants depending on $\lambda$. Chaining this argument iteratively completes the proof, since the number of steps required to cover the original disk of radius $1/2$ is also controlled in terms of $\lambda$. 
\end{proof}

We can also extend the volume estimates on the superlevel sets of the frequency to the case of Lipschitz manifolds.

\begin{proof}[Proof of Theorem \ref{main eff}]
As in the previous proof of Theorem \ref{main prop}, we can cover the half ball with ellipsoidal disks with major axis having length at most $\eta(2\lambda)^{-1}$, such that the linear change of coordinates taking the metric tensor to the identity at the center of each ellipse sends the corresponding boundary ellipse to the circle of radius $\eta/2$. By almost monotonicity of the frequency function, the frequency on these smaller domains is at most $C\Lambda$. Applying the quasiconformal map, we get that our function $\tilde{u}$ in the new coordinates is harmonic and has frequency at most $C\Lambda$. The harmonic case of Theorem \ref{main eff} implies
\begin{equation}
    \vol(\{z:N_{\tilde{u}}(z,r)>1\}\cap B_{\eta/2}(0))\le C\Lambda^2 r^2
\end{equation}
Applying the inverse of the quasiconformal map distorts this volume by a bounded constant factor, and by choosing the constant $c$ in the statement of Theorem \ref{main eff} sufficiently small, we have that its image covers $\{z:N(z,r)>c\}$ in the ellipsoidal disk. Repeating this argument for each such ellipsoidal disk and summing the volume contributions completes the proof.
\end{proof}

\section{More General Elliptic Equations with Lipschitz Coefficients}\label{section gen eqn}
In this section, we extend the estimate for the size of the superlevel sets of the frequency to more general elliptic equations at the cost of a slightly worse bound. Consider the operator
\begin{equation}
    Lu:=\Div(A\nabla u)+\tilde{\mathbf{b}}\cdot \nabla u,
\end{equation}
where $A$ is uniformly elliptic with constant $\lambda$ and has Lipschitz entries with constant controlled by $\lambda$ as well. We assume $\tilde{\mathbf{b}}$ has components that are bounded and measurable with size controlled by $\lambda$. The key is that for solutions to $Lu=0$, the logarithm of the modulus of the gradient will solve an elliptic equation, and we will make use of several of the results developed in \cite{Ale1}, \cite{Ale2}, and \cite{Zhu}. As we are in two dimensions, we may work locally in isothermal coordinates as before, where the equation takes the form
\begin{equation}\label{euc lap with drift}
\Delta u + \mathbf{b}\cdot\nabla u=0,
\end{equation}
on some domain which we will without loss of generality take to be a small Euclidean ball $B_{\kappa}(0)$, and $\mathbf{b}$ still has bounded, measurable components. Following the work of \cite{Ale1}, if we set $\phi=\log|\nabla u|^2$ then $\phi$ is a weak solution to the following equation away from the critical set:
\begin{align}\label{PDE w RHS}
    \Delta \phi = -\Div\left(\frac{\mathbf{b}\cdot\nabla u}{|\nabla u|^2}\nabla u\right)
\end{align}
For the reader's convenience, we present the calculation in this simplified case of \eqref{euc lap with drift} where the operator is the Euclidean operator with a bounded drift term in the appendix.
\begin{prop}\label{prop: log grad equation}
    Suppose $u$ solves \eqref{euc lap with drift}. Then on $B_{\kappa}(0)\setminus\mathcal{C}(u)$, we have that $\phi=\log|\nabla u|^2$ is a weak solution to \eqref{PDE w RHS}
\end{prop}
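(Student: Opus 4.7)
The approach is to upgrade the regularity of $u$ enough to make sense of every term in \eqref{PDE w RHS}, and then verify the identity by a direct pointwise a.e.\ computation on $U := B_\kappa(0)\setminus\mathcal{C}(u)$, handling the low regularity of $\mathbf{b}$ distributionally.

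Since $\mathbf{b}\in L^\infty$ and $u\in C^{1,\alpha}_{loc}$ by standard elliptic regularity, we have $\Delta u = -\mathbf{b}\cdot\nabla u \in L^\infty_{loc}$; Calder\'on--Zygmund then upgrades $u$ to $W^{2,p}_{loc}$ for every $p<\infty$. On the open set $U$, continuity of $\nabla u$ gives a positive local lower bound on $|\nabla u|$, so $\phi = \log|\nabla u|^2$ lies in $W^{1,p}_{loc}(U)$ with the a.e.\ chain-rule formula $\partial_i\phi = 2 u_k u_{ki}/|\nabla u|^2$ (summed on $k$).

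With this in hand I would compute $\Delta\phi$ via the Bochner-type expansion
\begin{equation*}
\Delta\phi = \frac{\Delta|\nabla u|^2}{|\nabla u|^2} - \frac{|\nabla|\nabla u|^2|^2}{|\nabla u|^4} = \frac{2|D^2u|^2 + 2\,\nabla u\cdot\nabla\Delta u}{|\nabla u|^2} - \frac{4|D^2u\,\nabla u|^2}{|\nabla u|^4},
\end{equation*}
expand the right-hand side of \eqref{PDE w RHS} by the product rule, and substitute $\Delta u = -\mathbf{b}\cdot\nabla u$ (so that $\nabla u \cdot \nabla\Delta u = -\nabla u\cdot\nabla(\mathbf{b}\cdot\nabla u)$). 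The $\nabla u\cdot\nabla(\mathbf{b}\cdot\nabla u)$ and $(\mathbf{b}\cdot\nabla u)^2$ terms match across the two sides. The remaining $\mathbf{b}$-independent combination $2|D^2u|^2/|\nabla u|^2 - 4|D^2u\,\nabla u|^2/|\nabla u|^4$ is handled via the two-dimensional identity relating $|D^2u|^2$, $|D^2u\,\nabla u|^2$, $|\nabla u|^2$, and $\Delta u$, which is exactly the identity responsible for the harmonicity of $\log|\nabla u|$ when $u$ is harmonic in the plane (the fact exploited throughout the paper via the factorization $F=u_x-iu_y$). The residual $\Delta u$ factor produced by that identity is once again $-\mathbf{b}\cdot\nabla u$, and combines with the terms already matched.

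The main obstacle is that $\mathbf{b}$ is only in $L^\infty$, so expressions such as $\nabla(\mathbf{b}\cdot\nabla u)$ and $\nabla\Delta u$ have no pointwise meaning and must be interpreted distributionally: every formal substitution actually occurs under an integration against a test function $\psi\in C_c^\infty(U)$, and any derivative of $\mathbf{b}$ that formally appears is integrated by parts onto $\psi$, so that no derivative of $\mathbf{b}$ survives in the final identity. A secondary point is simply the algebraic bookkeeping of the two-dimensional cancellations, which is routine once the terms are grouped correctly.
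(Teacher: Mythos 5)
Your proposal is correct and follows essentially the same route as the paper's appendix: compute $\Delta\log|\nabla u|^2$ via the Bochner-type expansion, expand the divergence on the right-hand side, cancel the common $\nabla u\cdot\nabla(\Delta u)$ term, and reduce to the purely two-dimensional algebraic identity relating $|D^2u|^2$, $|D^2u\,\nabla u|^2$, $|\nabla u|^2$, and $\Delta u$. Your treatment of the regularity issues ($W^{2,p}_{loc}$ bootstrapping and the distributional interpretation of the third-order terms) is in fact somewhat more careful than the paper's, which carries out the computation for strong solutions and asserts the weak case is analogous.
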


\begin{proof}
    This is deferred to the appendix.
\end{proof}

As in the harmonic case, we want to factorize the gradient into a nonvanishing component and a polynomial component which can be analyzed separately. Denote the points of the critical set by $z_1,...,z_N$, repeating as needed when there is multiplicity. By the work of \cite{Zhu}, we have $N\le C\Lambda$. We write
\[
e^{\phi(z)}=|\nabla u(z)|^2=\prod_{j=1}^N|z-z_j|^2 e^{\psi(z)}:=P(z) e^{\psi(z)}
\]
where $\psi$ is a bounded function and we let $P$ denote the polynomial in the factorization. Now, we reformulate a lower bound for our setting.
\begin{prop}[Remark 3 in \cite{Ale2}]
Under the hypotheses of this section, $|\psi(z)|\le C\Lambda$ for some universal $C>0$ and all $z$ in the unit disk.
\end{prop}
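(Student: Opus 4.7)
The plan is to decompose $\psi = \phi - \log P$ with $\phi = \log|\nabla u|^2$, show that $\psi$ extends across $\mathcal{C}(u)$ to a function satisfying a Poisson equation with bounded source on all of $B_\kappa(0)$, and then conclude by a standard elliptic $L^\infty$ estimate combined with an $L^2$ bound of size $C\Lambda$.

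For the extension, the Bers--Hartman--Wintner asymptotics for solutions of \eqref{euc lap with drift} give $|\nabla u(z)|^2 \asymp |z-z_j|^{2m_j}$ near each critical point $z_j$ with $m_j$ its multiplicity, so the log-singularities of $\phi$ and $\log P$ at $z_j$ cancel. For the PDE, Proposition \ref{prop: log grad equation} gives $\Delta \phi = -\Div \mathbf{V}$ with $\mathbf{V}=(\mathbf{b}\cdot\nabla u/|\nabla u|^2)\nabla u$ and $|\mathbf{V}|\le|\mathbf{b}|\le C$ away from $\mathcal{C}(u)$, and $\log P$ is harmonic there. Globally, the distributional singular contributions $4\pi m_j \delta_{z_j}$ from both $\Delta\phi$ and $\Delta\log P$ cancel exactly by the local expansion above, so
\[
\Delta \psi = -\Div \mathbf{V}
\]
holds in the sense of distributions on all of $B_\kappa(0)$.

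With this equation in hand, De Giorgi--Nash--Moser applied to $\psi$ on $B_\kappa(0)$ yields
\[
\Vert\psi\Vert_{L^\infty(B_{\kappa/2}(0))} \le C\bigl(\Vert\psi\Vert_{L^2(B_\kappa(0))} + \Vert\mathbf{V}\Vert_{L^\infty}\bigr),
\]
so it suffices to bound $\Vert\psi\Vert_{L^2(B_\kappa(0))} \le C\Lambda$. Splitting $\psi = \phi - \log P$, the $L^2$-norm of $\log P$ is at most $\sum_j \Vert 2\log|z-z_j|\Vert_{L^2(B_\kappa)}\le CN \le C\Lambda$ by the bound $N\le C\Lambda$ from \cite{Zhu}. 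The $L^2$-norm of $\phi$ reduces to bounding $|\nabla u|$ from above---straightforward from the frequency via doubling and \eqref{loc equiv}, giving $|\nabla u|\le 2^{C\Lambda}$---and from below in $L^2$.

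The main obstacle is the last step, the lower $L^2$-bound on $\log|\nabla u|$. In the harmonic case without drift this is Lemma \ref{lem: grad lower bd}: $-\log|\nabla u|$ is superharmonic and Harnack yields the bound. With the drift term present, $-\log|\nabla u|$ only satisfies the Poisson-type equation above and direct Harnack is unavailable; the needed quantitative non-degeneracy estimate requires Carleman-type unique-continuation arguments, which are exactly what Alessandrini develops in \cite{Ale1} and packages in the form used here in Remark 3 of \cite{Ale2}. My plan is therefore to carry out the elliptic bootstrap above and to import Alessandrini's estimate to control $\log|\nabla u|$ from below, completing the bound $|\psi|\le C\Lambda$.
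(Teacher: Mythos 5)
The paper does not actually prove this proposition: it is imported wholesale as Remark 3 of \cite{Ale2} (together with Lemmas 2.2--2.3 of \cite{Ale1} for the fact that $\psi$ solves \eqref{PDE w RHS} across the critical set), so there is no internal argument to compare against. Your reduction scheme is sensible as far as it goes: the cancellation of the logarithmic singularities via the local asymptotics, the distributional identity $\Delta\psi=-\Div\mathbf{V}$ on all of $B_\kappa(0)$, the local boundedness estimate for $\psi$ in terms of $\Vert\psi\Vert_{L^2}$ and $\Vert\mathbf{V}\Vert_{L^\infty}$, and the bound $\Vert\log P\Vert_{L^2}\le CN\le C\Lambda$ are all individually plausible steps.

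The problem is that the argument is circular at exactly the point where the proposition has content. The entire quantitative assertion is the lower bound $\psi\ge -C\Lambda$, i.e.\ the non-degeneracy statement $|\nabla u(z)|^2\gtrsim e^{-C\Lambda}P(z)$; you reduce this to a lower $L^2$ bound on $\log|\nabla u|$ and then propose to ``import Alessandrini's estimate'' --- but that estimate \emph{is} Remark 3 of \cite{Ale2}, the statement being proved. In the drift-free harmonic case the paper obtains the analogous bound from Harnack's inequality applied to the harmonic function $-\log|\nabla u|$ (Lemma \ref{lem: grad lower bd}); with a bounded drift that route is unavailable, and one must genuinely do the work, e.g.\ via Alessandrini's observation that $u_x-iu_y$ is quasiregular, hence factors as $F\circ\chi$ with $F$ holomorphic and $\chi$ quasiconformal with controlled distortion, after which the holomorphic Harnack argument applies to $F$; alternatively a Carleman or three-spheres argument with constants tracked in $\Lambda$. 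Until one of these is carried out, the proof is empty at its core. Two smaller points: the bound $|\psi|\le C\Lambda$ only makes sense under a normalization of $u$ (replacing $u$ by $cu$ shifts $\psi$ by $2\log c$), which you should make explicit before invoking $|\nabla u|\le 2^{C\Lambda}$; and your De Giorgi--Nash--Moser step gives the bound only on an interior ball $B_{\kappa/2}(0)$, whereas the proposition asserts it on the whole disk, so a covering or boundary version is needed.
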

By Lemmas 2.2 and 2.3 in \cite{Ale1}, we have that $\psi$ solves the PDE \eqref{PDE w RHS} in the entire domain, not just away from the critical set. By Calder\'on-Zygmund theory, we have the bound
\begin{equation}\label{BMO bound for derivative}
\Vert\nabla\psi\Vert_{BMO(B_{\kappa/2}(0))}\le C (\Vert\psi\Vert_{L^{\infty}(B_\kappa(0))}+\Vert\mathbf{b}\Vert_{L^{\infty}(B_\kappa(0))}).
\end{equation}
Indeed, this follows from the representation formula
\begin{align*}
\psi(x)&=\int_{\partial B_{\kappa}(0)}K_{\kappa}(x,y)\psi(y)\,dS(y)+\int_{B_{\kappa}(0)}\nabla_yG(x,y)\cdot\nabla u(y)\frac{\mathbf{b}(y)\cdot\nabla u(y)}{|\nabla u(y)|^2}\,dy\\
&-\int_{\partial B_{\kappa(0)}}G(x,y)\frac{\mathbf{b}(y)\cdot\nabla u(y)}{|\nabla u(y)|^2}\partial_{\nu}u(y)\,dS(y)=(I)+(II)+(III)
\end{align*}
where $K_{\kappa}$ is the Poisson kernel for the ball of radius $\kappa$ and $G(x,y)=\log|x-y|$ is the Green's function. Taking the gradient in $x$, the $(I)$ and $(III)$ terms are bounded for $x\in B_{\kappa/2}(0)$, and the term $(II)$ can be recognized as a vector-valued Calder\'on-Zygmund operator, which is known to map $L^{\infty}$ to $BMO$ continuously.
Under our assumptions, this in particular implies
\begin{equation}
    \Vert \nabla \psi\Vert_{BMO(B_{\kappa/2}(0))}\le C\Lambda +C',
\end{equation}
and we can absorb the $C'$ by assuming $\Lambda$ is sufficiently large. With this substitute for Lipschitz control established, we can now show that the frequency of $e^{\psi}$ is very small at scales below $1/\Lambda^{1+\epsilon}$.
\begin{prop}\label{prop: weaker nonvanishing}
    Let $e^{\psi}$ be defined as above and assume $\Lambda$ is sufficiently large. For every $\epsilon>0$, there is a constant $c_{\epsilon}$ such that if $r<c_{\epsilon}/\Lambda^{1+\epsilon}$, then
    \[
    N_{\exp(\psi)}(x,r)\le \frac{3}{4}
    \]
    holds uniformly in $x$.
\end{prop}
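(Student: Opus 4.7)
The plan is to use the BMO control \eqref{BMO bound for derivative} on $\nabla\psi$ as a substitute for Lipschitz control in the harmonic version of this statement (Proposition \ref{prop: small scale freq bd}), showing that $\psi$ itself has vanishingly small BMO norm on $B_{2r}(x)$ when $r<c_\epsilon/\Lambda^{1+\epsilon}$, and then using John--Nirenberg to exponentiate this into the desired frequency bound. The key quantity throughout will be $M := Cr\Lambda \log(1/r)$, which by the hypothesis satisfies $M \le C_\epsilon \log\Lambda/\Lambda^\epsilon$, and so can be made arbitrarily small by taking $\Lambda$ large depending on $\epsilon$.

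The first step is to upgrade the BMO bound to a logarithmic $L^1$ average bound:
\begin{equation*}
\dashint_{B_r(x)}|\nabla\psi|\,dy \le C\Lambda(1+\log(1/r))
\end{equation*}
for any $B_r(x)\subset B_{\kappa/2}(0)$. For this, the representation formula preceding \eqref{BMO bound for derivative} shows that the contributions from $(I)$ and $(III)$ yield pointwise bounds of order $\Lambda$ after differentiation, while $\nabla_x(II)$ is a Calder\'on--Zygmund operator applied to a vector field of $L^\infty$ norm $\lesssim \lambda$ supported in $B_\kappa(0)$; $L^p$-boundedness for any fixed $1<p<\infty$ therefore gives $\Vert\nabla\psi\Vert_{L^p(B_{\kappa/2})}\le C_p\Lambda$, and in particular $\dashint_{B_{\kappa/2}}|\nabla\psi|\le C\Lambda$. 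The displayed inequality then follows from the standard logarithmic scale-comparison of averages for BMO functions. Applying Poincar\'e's inequality on each sub-ball $B_\rho(y)\subset B_{2r}(x)$ and using that $\rho\log(1/\rho)$ is increasing for small $\rho$ yields $\Vert\psi\Vert_{BMO(B_{2r}(x))}\le Cr\Lambda\log(1/r) = M$.

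To close the argument, set $c := \psi_{B_{2r}(x)}$ and apply Jensen's inequality to the denominator to obtain
\begin{equation*}
\frac{\dashint_{B_{2r}(x)}e^\psi}{\dashint_{B_r(x)}e^\psi} \le e^{c - \psi_{B_r(x)}} \, \dashint_{B_{2r}(x)}e^{\psi - c}.
\end{equation*}
The nested-ball estimate $|\psi_{B_{2r}(x)} - \psi_{B_r(x)}| \le CM$ controls the first factor by $e^{O(M)}$. For the second factor, use that $\dashint_{B_{2r}(x)}(\psi - c)\,dy = 0$ and the Taylor remainder estimate $|e^s - 1 - s|\le \tfrac{1}{2} s^2 e^{|s|}$, together with Cauchy--Schwarz and the higher-moment consequences of John--Nirenberg (which, when $M$ is small, give $\dashint(\psi-c)^4 \le CM^4$ and $\dashint e^{2|\psi-c|}\le C$), to deduce $\bigl|\dashint_{B_{2r}(x)}e^{\psi - c} - 1\bigr| \le CM^2$. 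Multiplying the two contributions, the ratio is at most $1 + O(M)$, so $N_{\exp(\psi)}(x,r) \le O(M) < 3/4$ for $\Lambda$ sufficiently large depending on $\epsilon$.

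The main obstacle is the logarithmic factor $\log(1/r)$ in the $L^1$ average bound for $|\nabla\psi|$. This factor is exactly what prevents the harmonic argument of Proposition \ref{prop: small scale freq bd} from carrying over at the scale $r\sim 1/\Lambda$, and it forces the $\Lambda^{1+\epsilon}$ scale in the hypothesis: the extra $\Lambda^\epsilon$ is precisely what is needed to absorb the logarithm.
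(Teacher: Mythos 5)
Your proof is correct, and it takes a genuinely different route from the paper's. The paper writes $\psi(y)-\psi(y_0)=g_{y_0}(y)\cdot(y-y_0)$ with $g_{y_0}$ the gradient averaged along segments, splits $g_{y_0}$ into its mean over $B_{2r}(x)$ plus its oscillation, controls the mean via the $L^p$ bounds $\Vert\nabla\psi\Vert_{L^p}\le C_p\Lambda$ with $p=p(\epsilon)$ large (contributing $C_\epsilon\Lambda r^{1-2/p}$), and controls the oscillation by John--Nirenberg applied to $g_{y_0}$, whose BMO norm is $\lesssim\Lambda$. You instead bound $\Vert\psi\Vert_{BMO(B_{2r}(x))}$ directly, via Poincar\'e on sub-balls together with the logarithmic growth across scales of the averages of the BMO function $\nabla\psi$ (anchored by the base-scale bound $\dashint|\nabla\psi|\lesssim\Lambda$), and then exponentiate using Jensen plus John--Nirenberg applied to $\psi$ itself. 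Both arguments rest on the same two inputs --- \eqref{BMO bound for derivative} and the $L^p$ bounds on $\nabla\psi$ --- but your decomposition avoids the line-integral construction and the choice of large $p$, and yields the slightly sharper smallness parameter $r\Lambda\log(1/r)$ in place of $\Lambda r^{1-2/p}$; either suffices for the $\Lambda^{1+\epsilon}$ threshold. Two minor points if you write this up: the comparison of averages over the non-concentric balls $B_r(x)$ and $B_{\kappa/2}(0)$ should pass through an intermediate concentric ball, and the Taylor-remainder refinement giving $\bigl|\dashint e^{\psi-c}-1\bigr|\le CM^2$ is more than needed, since the John--Nirenberg corollary already gives $\dashint e^{|\psi-c|}\le 1+CM$ once $M$ is small (and note the paper's convention makes $N_{\exp(\psi)}$ a doubling index of $e^{2\psi}$, which only changes $M$ by a factor of $2$).
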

\begin{proof}
We consider the $L^2$ doubling index, i.e.
    \[
    N(x,r)=\log_2\left(\frac{\dashint_{B_{2r}(x)} \exp(-2\psi(y))\,dy}{\dashint_{B_{r}(x)}\exp(-2\psi(y))\,dy}\right):=\log_2\left(\dashint_{B_{2r}(x)} \exp(-2\psi(y)+2\psi(y_0))\right)
    \]
    where $y_0$ is a point in $B_r(x)$ where the average in the denominator is achieved. Define $g_{y_0}(y)=\int_0^1 \nabla\psi((1-t)y_0+ty)$ so that $g_{y_0}(y)\cdot(y-y_0)=\psi(y)-\psi(y_0)$ by the fundamental theorem of calculus. By scale invariance of BMO, we have that 
\begin{equation}\label{BMO grad easy estimate}
\Vert g_{y_0}\Vert_{BMO}\le C\Vert \nabla \psi\Vert_{BMO}.
\end{equation}
We will make use of the notation $\langle f \rangle_U$ to denote the average of a function (possibly vector-valued) over an open set $U$. Thus, we rewrite
    \begin{equation}\label{exp split}
    N(x,r)=\log_2\left( \dashint_{B_{2r}(x)} e^{[g_{y_0}(y)-\langle g_{y_0}\rangle_{B_{2r}(x)}]\cdot(y-y_0)}e^{\langle g_{y_0}\rangle_{B_{2r}(x)}\cdot (y-y_0)} \right).
    \end{equation}
    We will start by bounding the $L^{\infty}$ norm of the second factor. Recall that $\Vert \nabla\psi\Vert_{L^p(B_1(0))}\le C_p \Lambda$ for all $1\le p<\infty$. Thus, we have that
    \[
    \frac{1}{Cr^2}\int_{B_r(x)}|\nabla \psi(y)|^2\,dy\le c\frac{1}{r^2}\Vert \nabla \psi\Vert_{L^p} r^{2/p'}\le C_p \Lambda r^{2(1/p'-1)}.
    \]
    We can choose $p=p(\epsilon)$ sufficiently large to make the exponent less than any $\epsilon>0$. Thus, we have that
    \[
    |\langle g_{y_0}\rangle_{B_{2r}(x)}\cdot (y-y_0)|\le C_{\epsilon}\Lambda r^{1-\epsilon}.
    \]
    Now, we estimate the $L^1$ norm of the first factor in \eqref{exp split}. We make use of the following corollary of the John-Nirenberg inequality
    \[
    \frac{1}{|B|}\int_B\exp(\gamma |f(x)-\langle f\rangle_B|/\Vert f\Vert_{BMO})\, dx \le 1+\frac{4e^2\gamma}{1-4e\gamma},
    \]
    valid if $\gamma<(4e)^{-1}$, see e.g. \cite[Corollary 3.1.7]{Grafakos2} which presents this for averages over cubes (the formulation for balls is proved analogously). In our setting, this gives
    \[
    \dashint_{B_{2r}(x)} \exp(|g_{y_0}-\langle g_{y_0}\rangle_{B_{2r}(x)}|\cdot|y-y_0|)\,dy \le 1+\frac{8e^2 r\Vert g_{y_0}\Vert_{BMO}}{1-8re\Vert g_{y_0}\Vert_{BMO}}.
    \]
    We can combine this with the estimate \eqref{BMO grad easy estimate} and the $L^{\infty}$ normalization of $\nabla\psi$. Thus, if we choose $r<c_{\epsilon}\Lambda^{-1-\epsilon}$, we see that $N(x,r)\le 1/2+1/\Lambda$, as desired.

\end{proof}

For the polynomial part, we can reuse Proposition \ref{prop: poly vol}. We can now estimate the size of the superlevel sets of the frequency. The proof follows by repeating the argument from the harmonic case using the slightly worse bound for the nonvanishing part given in Proposition \ref{prop: weaker nonvanishing}. Thus, for every $\epsilon>0$, we get the following nearly quadratic bound in a single isothermal coordinate patch:
\[
\vol(\{z:N(z,r)>c\}\cap B_{\kappa/2}(0))\le C_{\epsilon}\Lambda^{2+\epsilon} r^2.
\]
Covering the interior of our original domain with overlapping isothermal coordinate patches allows us to extend the bound to the interior of the original domain. Thus, we conclude the following generalization.
\begin{thm}\label{eff crit general eqn}
    Suppose $u$ is a solution to $\Div(A\nabla u)+\mathbf{b}\cdot\nabla u=0$ on the Euclidean disk of radius 4. Assume its frequency satisfies $N_u(0,1)\le \Lambda$. Then for every $\epsilon>0$, there is a constant $C_{\epsilon}$ such that we have the following volume estimate
\begin{equation}
    \vol(\{z:N(z,r)>c\}\cap B_{1/2}(0))\le C_{\epsilon}\Lambda^{2+\epsilon} r^2,
\end{equation}
where $C_{\epsilon}$ depends only on the Lipschitz constants of $A,\mathbf{b}$ and the ellipticity constant of $A$.
\end{thm}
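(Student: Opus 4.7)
The plan is to repeat the proof of Proposition \ref{main eff harm} in the general elliptic setting, substituting the weaker control on the nonvanishing factor from Proposition \ref{prop: weaker nonvanishing} and absorbing the resulting degradation into the $\Lambda^\epsilon$ factor. First I would localize via isothermal coordinates as in Section 7, covering $B_{1/2}(0)$ by a finite family of small ellipsoidal patches on which, after the quasiconformal coordinate change, the equation takes the form $\Delta u + \mathbf{b}\cdot\nabla u = 0$ on a Euclidean ball $B_\kappa(0)$. Proposition \ref{deriv bd quasiconf} ensures that the coordinate map has bi-Lipschitz distortion controlled by $\lambda$ alone, so local frequencies remain $\lesssim \Lambda$ and volumes in the two coordinate systems are comparable.

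On such a patch I would use the factorization $|\nabla u(z)|^2 = P(z) e^{\psi(z)}$ with $P(z) = |Q(z)|^2$, $Q(z) = \prod_{j=1}^N (z - z_j)$ a monic holomorphic polynomial whose zeros are the critical points of $u$; Zhu's result forces $N \le C\Lambda$, and Alessandrini's bound gives $|\psi| \le C\Lambda$. The elementary inequality
\begin{equation*}
\frac{\dashint_{B_{2r}(z)} P e^\psi}{\dashint_{B_r(z)} P e^\psi} \le \frac{\sup_{B_{2r}(z)} P}{\inf_{B_r(z)} P} \cdot \frac{\dashint_{B_{2r}(z)} e^\psi}{\dashint_{B_r(z)} e^\psi}
\end{equation*}
splits the frequency, after $\log_2$, into a polynomial piece and a nonvanishing piece. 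Fixing $\epsilon > 0$ and setting $\epsilon' = \epsilon/2$, for $r \le c_{\epsilon'}/\Lambda^{1+\epsilon'}$ Corollary \ref{cor: poly vol} applied to $Q$ (of degree $\le C\Lambda$) controls the polynomial factor by $2^{1/2}$ outside a bad set of measure $C \Lambda^2 r^2$, while Proposition \ref{prop: weaker nonvanishing} controls the nonvanishing factor by $2^{3/4}$ uniformly. Combining, $N(z,r) \le 5/4$ off a set of measure $\le C\Lambda^2 r^2 \le C \Lambda^{2+\epsilon} r^2$, so I would take $c = 5/4$ in the statement.

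For the complementary range $r \ge c_{\epsilon'}/\Lambda^{1+\epsilon'}$ the estimate is essentially trivial, since $\Lambda^{2+\epsilon} r^2 \ge c_{\epsilon/2}^2$ by construction, and enlarging $C_\epsilon$ absorbs $\vol(B_{1/2}(0))$ directly. Transporting each local estimate back through the inverse of the isothermal map (bounded distortion) and summing over the finitely many patches, whose number depends only on $\lambda$, gives the global bound. The main technical burden is carried by Proposition \ref{prop: weaker nonvanishing}: the drift only affords BMO control on $\nabla\psi$ via Calder\'on--Zygmund theory rather than genuine Lipschitz control, and this is what forces the cutoff scale down from $r \sim 1/\Lambda$ as in the harmonic proof to $r \sim 1/\Lambda^{1+\epsilon'}$, which in turn produces the subpolynomial loss $\Lambda^\epsilon$ in the final estimate.
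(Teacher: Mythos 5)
Your proposal is correct and follows essentially the same route as the paper, which likewise localizes in isothermal coordinates, splits the doubling ratio via the Alessandrini factorization $|\nabla u|^2 = Pe^{\psi}$, applies the polynomial volume estimate to the critical-point factor and Proposition \ref{prop: weaker nonvanishing} to the nonvanishing factor, and sums over patches. You have in fact supplied more detail than the paper does (the explicit choice $\epsilon'=\epsilon/2$, the level $c=5/4$, and the trivial large-$r$ regime), all of which is consistent with the intended argument.
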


\section*{Appendix: Proof of Proposition \ref{prop: log grad equation}}
For convenience, we demonstrate this in the case of additional regularity in which case we verify it is a strong solution, but the argument is analogous when verifying the result for weak solutions. Let $v=|\nabla u|^2$. We start by computing
\begin{align*}
\Delta (\log v)&=\frac{1}{v^2}\Div(2 D^2u\nabla u)+2D^2u\nabla u\cdot\nabla(v^{-1}) \\
&=\frac{2}{v}(|D^2u|^2+\nabla u\cdot\nabla(\Delta u))-4D^2u\nabla u\cdot\frac{D^2u\nabla u}{v^2}
\end{align*}
Our goal is to show that this equals $-\Div\left(\frac{\mathbf{b}\cdot\nabla u}{|\nabla u|^2}\right)$, which we can rewrite as
\begin{align*}
\Div\left(\frac{\Delta u}{v^2}\right)&=2\nabla\left(\frac{\Delta u}{v}\right)\cdot\nabla u+2\frac{(\Delta u)^2}{v} \\
&=\frac{2}{v}\nabla(\Delta u)\cdot\nabla u-4\Delta u\frac{(D^2 u\nabla u)\cdot\nabla u}{v^2}+2\frac{(\Delta u)^2}{v}
\end{align*}
We notice the common term of $\frac{2}{v}\nabla u\cdot\nabla(\Delta u)$, so it is enough to verify the remaining terms agree. It will be convenient to consider the relevant quantities multiplied by $v^2$. At this point, the equality will depend on the fact that we are in two dimensions, so we expand in terms of partial derivatives, which we'll denote by $u_x,u_y,u_{xx},u_{xy},u_{yy}$. The equality we want to verify is
\begin{equation}\label{goal eqn}
2v|D^2u|^2-4|D^2u\nabla u|^2=-4\Delta u(D^2u\nabla u)\cdot\nabla u+2v(\Delta u)^2
\end{equation}
The left side of \eqref{goal eqn} can be expanded as
\[
2(u_x^2+u_y^2)(u_{xx}^2+2u_{xy}^2+u_{yy}^2)-4(u_{xx}u_x+u_{xy}u_y)^2-4(u_{xy}u_x+u_{yy}u_y)^2=(I)+(II)+(III)
\]
The right side of \eqref{goal eqn} can be expanded as
\[
-4(u_{xx}+u_{yy})(u_{xx}u_x^2+2u_{xy}u_xu_y+u_{yy}u_y^2)+2(u_x^2+u_y^2)(u_{xx}+u_{yy})^2=(IV)+(V)
\]
Notice that $(V)-(I)=4(u_x^2+u_y^2)(u_{xx}u_{yy}-u_{xy}^2)$. We can consolidate
\begin{align*}
(II)+(III)&=-4(u_{xx}^2u_x^2+2u_{xx}u_{xy}u_xu_y+u_{xy}^2u_y^2+u_{xy}^2u_x^2+2u_{yy}u_{xy}u_xu_y+u_{yy}^2u_y^2) \\
&= -4(u_{xx}^2u_x^2+2u_{xx}u_{xy}u_xu_y+2u_{yy}u_{xy}u_xu_y+u_{yy}^2u_y^2)-4(u_x^2+u_y^2)u_{xy}^2
\end{align*}
On the other hand, we have that
\begin{align*}
(IV)+((V)-(I))=-&4(u_{xx}^2u_x^2+2u_{xx}u_{xy}u_xu_y+u_{xx}u_{yy}u_y^2 \\
&+u_{xx}u_{yy}u_x^2+2u_{yy}u_{xy}u_xu_y+u_{yy}^2u_y^2)\\
&+4(u_x^2+u_y^2)u_{xx}u_{yy}-4(u_x^2+u_y^2)u_{xy}^2
\end{align*}
At this point, it is very straightforward to verify that the terms in $(II)+(III)$ agree with those in $(IV)+(V)-(I)$, completing the proof.
\bibliographystyle{alpha}
\bibliography{sources}

\end{document}